\documentclass[10pt,a4paper,reqno]{amsart}

\usepackage{%
    amssymb%
    ,graphicx%
    ,mathrsfs%
    ,eucal%
    ,xcolor%
    ,enumitem%
    ,esint}
\usepackage[%
    pdfauthor={Piero D'Ancona}%
    ,colorlinks=true%
    ,linkcolor=magenta%
    ,citecolor=cyan%
    ,urlcolor=blue%
    ,hyperfootnotes=false%
]{hyperref}


\numberwithin{equation}{section}
\newtheorem{theorem}{Theorem}[section]
\newtheorem{corollary}[theorem]{Corollary}

\theoremstyle{remark}
\newtheorem{remark}{Remark}[section]

\theoremstyle{definition}
\newtheorem{definition}[theorem]{Definition}
\usepackage{stmaryrd}
  {\baselineskip0pt\normalfont\footnotesize
  \setlength\abovedisplayskip{1pt}
  \setlength\abovedisplayshortskip{1pt}
  \setlength\belowdisplayskip{1pt}
  \setlength\belowdisplayshortskip{1pt}
  \vskip2pt\par\noindent$\llbracket$\ignorespaces}%
  {\ignorespaces$\rrbracket$\vskip2pt}


\newcommand{\bra}[1]{\langle #1 \rangle}
\newcommand{\one}[1]{\mathbf{1}_{#1}}



\title[Kato smoothing]
{Kato smoothing and Strichartz estimates\\
for wave equations with magnetic potentials}
\date{\today}    
\author{Piero D'Ancona}
\address{Piero D'Ancona: Unversit\`a di Roma ``La Sapienza'',
Dipartimento di Matematica, Piazzale A.~Moro 2, I-00185 Roma, Italy}
\email{dancona@mat.uniroma1.it}
\subjclass[2000]{%
35L70, 
58J45
}\keywords{}
\begin{document}
\begin{abstract}
  Let $H$ be a selfadjoint operator 
  and $A$ a closed operator
  on a Hilbert space $\mathcal{H}$.
  If $A$ is $H$-(super)smooth in the sense of Kato-Yajima,
  we prove that $AH^{-\frac14}$ is 
  $\sqrt{H}$-(super)smooth. This allows to include wave
  and Klein-Gordon equations 
  in the abstract theory at the same level of generality
  as Schr\"{o}dinger equations.

  We give a few applications and in particular, based on
  the resolvent estimates of Erdogan, Goldberg and Schlag
  \cite{ErdoganGoldbergSchlag09-a},
  we prove Strichartz estimates for wave equations 
  perturbed with
  large magnetic potentials on $\mathbb{R}^{n}$, $n\ge3$.
\end{abstract}
\maketitle



\section{Introduction}\label{sec:introduction}

In his fundamental 1965 paper \cite{Kato65-b}, Kato developes
a theory of similarity for small perturbations 
$H(\epsilon)=H+\epsilon V$
of an unbounded operator $H$ on a Hilbert space $\mathcal{H}$,
by constructing a bounded wave operator
$W(\epsilon)$
with the property $H(\epsilon)=W(\epsilon)H W^{-1}(\epsilon)$.
In the selfadjoint case $H=H^{*}$ the theory can be
precised and provides a bridge between
the dispersive properties of the Schr\"{o}dinger flow
$e^{itH}$ and uniform estimates for the resolvent
operator $R(z)=(H-z)^{-1}$. 

The relevance of Kato's theory
in the study of dispersive equations was understood
already in 
\cite{Yajima87-a} and
\cite{JourneSofferSogge91-a}.
A remarkable application was given in 
\cite{RodnianskiSchlag04-a} 
where Kato smoothing was used to give a simple
proof of Strichartz
estimates for the flow
$e^{it(-\Delta+V)}$ perturbed by a short range
potential $|V(x)|\lesssim \bra{x}^{-2-\epsilon}$.
The corresponding result for short range
magnetic potentials was proved in
\cite{DAnconaFanelli08-a} in the case of small potentials and
\cite{ErdoganGoldbergSchlag09-a} for large potentials
(for recent related results, see also
\cite{DAnconaFanelli06-a},
\cite{MarzuolaMetcalfeTataru08-a},
\cite{ErdoganGoldbergGreen13-a},
\cite{DAnconaSelberg12-a},
\cite{Green13-a}).

It is natural to investigate applications of Kato's theory
to the corresponding
wave-Klein-Gordon flow $e^{it \sqrt{H+\nu}}$ 
(with $H+\nu\ge0$).
The standard approach is a
reduction to the Schr\"{o}dinger flow $e^{itK}$ where
\begin{equation*}
  K=
  \begin{pmatrix}
   0 & 1 \\
   H &  0
  \end{pmatrix}
  \quad\implies \quad
  \exp(itK)=
    \begin{pmatrix}
     \cos(t \sqrt{H}) & \frac{i}{\sqrt{H}}\sin(t \sqrt{H}) \\
      i \sqrt{H}\sin(t \sqrt{H})&  \cos(t \sqrt{H})
    \end{pmatrix}
\end{equation*}
however this path leads to a loss
in the sharpness of the estimates, and in some cases
it requires
some ad-hoc argument to prove the necessary
resolvent estimates for $K$
(see e.g.~\cite{Mochizuki10-a}, \cite{DAnconaRacke12-a} or
\cite{BurqPlanchonStalker04-a}).

The first goal of this note is to deduce
smoothing estimates for abstract wave equations 
within the framework of Kato's theory:
given a non negative selfadjoint operator $H$
and a closed operator $A$ on the Hilbert space $\mathcal{H}$,
we prove that
\begin{center}
  $A$ is $H$-(super)smooth
  $\implies$
  $AH^{-\frac14}$ is $\sqrt{H}$-(super)smooth
\end{center}
in the sense of Kato-Yajima;
see Section \ref{sec:kato} for definitions and details and
in particular Theorem \ref{the:wave}.
Actually we prove that if $H+\nu\ge0$ for some $\nu\in \mathbb{R}$
and $A$ is $H$-(super)smooth, then
$A(H+\nu)^{-\frac14}$ is $\sqrt{H+\nu}$-(super)smooth.

It is clear that the range of applications is quite wide.
In Section \ref{sec:applications} we picked three.
The first two are mostly known results:
smoothing estimates for the flows
generated by powers of the Laplacian; and the
smoothing estimates for wave equations with
potentials of critical decay which were obtained in
\cite{BurqPlanchonStalker04-a}.

As a main application of the abstract result, we prove sharp
Strichartz estimates for wave equations perturbed with
large magnetic potentials, thus extending
the result for small potentials
in \cite{DAnconaFanelli08-a}.
This result is based on the resolvent estimate
due to Erdogan, Goldberg and Schlag
\cite{ErdoganGoldbergSchlag09-a}
for the magnetic Schr\"{o}dinger operator
\begin{equation*}
  H=(i \nabla+A(x))^{2}+V(x)
  \quad\text{ on $\mathbb{R}^{n}$, $n\ge3$}
\end{equation*}
under the following assumptions: $A(x)\in \mathbb{R}^{n}$,
$V(x)\in \mathbb{R}$; moreover for some $C,\epsilon_{0}>0$
\begin{equation*}
  |A(x)|+\bra{x}|V(x)|\le C \bra{x}^{-1-\epsilon_{0}},
\end{equation*}
\begin{equation*}
  \forall\  0<\epsilon<\epsilon_{0}, \quad
  \bra{x}^{1+\epsilon}A(x)\in \dot H^{\frac12}_{2n}(\mathbb{R}^{n})
  \cap
  C^{0}(\mathbb{R}^{n})
\end{equation*}
($\|u\|_{\dot H^{s}_{q}}:=\||D|^{s}u\|_{L^{q}}$,
$|D|^{s}:=(-\Delta)^{\frac s2}$);
finally, 0 is not an eigenvalue of $H$
nor a resonance, in the sense that
\begin{equation*}
  \bra{x}^{\frac{n-4}{2}-}u(x)\in L^{2}(\mathbb{R}^{n}),
  \qquad
  Hu=0
  \quad\implies\quad
  u \equiv0.
\end{equation*}
In dimension $n\ge5$ it is sufficient to assume that
0 is not an eigenvalue.
Under these assumptions, Erdogan, Goldberg and Schlag
proved that the Schr\"{o}dinger flow
$e^{itH}$ satisfies the same Strichartz estimates as the
free flow $e^{it \Delta}$. In Section 
\ref{sub:wave_equation_with_large} we combine their
resolvent estimate with
Theorem \ref{the:wavestr} to prove:

\begin{theorem}\label{the:waveintro}
  Assume $H$ is selfadjoint, nonnegative and satisfies the
  previous assumptions.
  Then the wave flow satisfies the
    non-endpoint Strichartz estimates
    \begin{equation*}
      \textstyle
      \||D|^{\frac1q-\frac1p}e^{it \sqrt{H}}f\|_{L^{p}L^{q}}
      \lesssim
      \|f\|_{\dot H^{\frac12}},
    \end{equation*}
    and
    \begin{equation*}
      \textstyle
      \||D|^{\frac1q-\frac1p}
         \sin(t \sqrt{H}) H^{-\frac12}g
         \|_{L^{p}L^{q}}
      \lesssim
      \|g\|_{\dot H^{-\frac12}},
    \end{equation*}
    for all $2<p\le \infty$, $2\le q<\frac{2(n-1)}{(n-3)}$
    with $2p^{-1}+(n-1)q^{-1}=(n-1)2^{-1}$.
\end{theorem}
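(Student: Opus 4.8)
The plan is to realize $H$ as a perturbation of $H_{0}=-\Delta$ inside the abstract scheme of Sections~\ref{sec:kato}--\ref{sec:applications} and then invoke Theorem~\ref{the:wavestr}. Write $H=H_{0}+W$ with $W=(i\nabla+A)^{2}+V-(-\Delta)$, i.e.\ formally $W=i(\nabla\cdot A)+2iA\cdot\nabla+|A|^{2}+V$ (the first-order terms read in the quadratic-form sense, since $\nabla\cdot A$ is only a distribution). The first and by far the most delicate step is to decompose $W$ as a finite sum $W=\sum_{k}B_{k}^{*}\Gamma_{k}C_{k}$ with each $\Gamma_{k}$ bounded on $L^{2}(\mathbb{R}^{n})$ and each $B_{k},C_{k}$ taken from a fixed finite family of operators of the schematic form $\bra{x}^{-\frac12-\epsilon}(1+|D|)^{\frac12}$. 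For the electric part $|A|^{2}+V$ one only needs zero-order weights, since $|A|^{2}+\bra{x}|V|\lesssim\bra{x}^{-1-\epsilon_{0}}$ gives $(|A|^{2}+|V|)^{\frac12}\lesssim\bra{x}^{-1-\epsilon_{0}/2}$; this is the short-range splitting of \cite{RodnianskiSchlag04-a}. The first-order part $i(\nabla\cdot A)+2iA\cdot\nabla$ is written, after integrating by parts inside the quadratic form and using $\nabla=R|D|$ with $R$ the Riesz transform, as a sum of terms $B^{*}\Gamma C$ with $B,C$ of the form $\bra{x}^{-\frac12-\epsilon}|D|^{\frac12}$ and $\Gamma$ built out of $A$, $R$ and the weights; boundedness of $\Gamma$ on $L^{2}$ is exactly where the hypotheses $|A|\lesssim\bra{x}^{-1-\epsilon_{0}}$ and $\bra{x}^{1+\epsilon}A\in\dot H^{\frac12}_{2n}\cap C^{0}$ are used — the decay makes the weight $\bra{x}^{\frac12+\epsilon}A\,\bra{x}^{\frac12+\epsilon}$ bounded for $\epsilon<\epsilon_{0}/2$, while the $\dot H^{\frac12}_{2n}$-regularity (through Sobolev embedding and H\"{o}lder) is what lets one move the two half-derivatives past the rough coefficient $A$.

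Next one verifies that every member of the family $\{B_{k},C_{k}\}$ is both $H_{0}$-smooth and $H$-smooth in the Kato--Yajima sense. $H_{0}$-smoothness is the classical local smoothing estimate for $e^{it\Delta}$, equivalently the resolvent bound $\sup_{z\notin\mathbb{R}}\bigl\|\bra{x}^{-\frac12-\epsilon}(1+|D|)^{\frac12}(-\Delta-z)^{-1}(1+|D|)^{\frac12}\bra{x}^{-\frac12-\epsilon}\bigr\|<\infty$, valid for $n\ge3$. $H$-smoothness is precisely the limiting-absorption resolvent estimate of Erdogan, Goldberg and Schlag \cite{ErdoganGoldbergSchlag09-a}: under exactly the stated assumptions on $A$, $V$ and the threshold (no eigenvalue, no resonance at $0$) one gets $\sup_{z\notin\mathbb{R}}\|B_{k}(H-z)^{-1}B_{j}^{*}\|<\infty$, which is Kato's criterion for $H$-smoothness of the $B_{k}$ — the absence of a zero eigenvalue/resonance being exactly what makes the bound uniform up to the threshold. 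The same considerations provide the equivalence of norms $\|H^{s}f\|\sim\||D|^{2s}f\|$ for $|s|\le\tfrac12$, used repeatedly to pass between the homogeneous spaces $\dot H^{s}$ of the statement and the Sobolev spaces adapted to $H$.

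These are exactly the inputs of Theorem~\ref{the:wavestr}: the free wave Strichartz estimate $\||D|^{\frac1q-\frac1p}e^{it\sqrt{-\Delta}}f\|_{L^{p}L^{q}}\lesssim\|f\|_{\dot H^{\frac12}}$ in the stated admissible range (a routine check from the classical wave Strichartz estimates and the scaling relation $2p^{-1}+(n-1)q^{-1}=(n-1)2^{-1}$), together with the $H_{0}$- and $H$-smoothness of the factors of $W$. Theorem~\ref{the:wavestr} — which internally calls on Theorem~\ref{the:wave} to upgrade $H$-smoothness of $B_{k}$ to $\sqrt{H}$-smoothness of $B_{k}H^{-\frac14}$, and likewise for $H_{0}$ — then yields the first estimate. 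In outline: write the Duhamel formula for $u=\cos(t\sqrt{H})f$ relative to $\partial_{t}^{2}+H_{0}$, so that $u$ is a free wave plus $-\int_{0}^{t}\tfrac{\sin((t-s)\sqrt{H_{0}})}{\sqrt{H_{0}}}Wu(s)\,ds$; expand $Wu=\sum_{k}B_{k}^{*}\Gamma_{k}C_{k}u$, and combine the dual free smoothing estimate, the free Strichartz estimate and the Christ--Kiselev lemma (legitimate since $p>2$) to bound the Strichartz norm of the Duhamel term by $\sum_{k}\|C_{k}u\|_{L^{2}L^{2}}$; the $H_{0}$-smoothing of the $C_{k}$ controls the free part, while the $H$-smoothing — equivalently the invertibility of $I+\mathcal{D}_{0}W$ on the smoothing space $\bigcap_{k}\{C_{k}v\in L^{2}L^{2}\}$, by a Neumann-series/resolvent argument in the spirit of \cite{RodnianskiSchlag04-a} — closes $\sum_{k}\|C_{k}u\|_{L^{2}L^{2}}\lesssim\|f\|_{\dot H^{\frac12}}$. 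The second estimate follows by applying the first to $e^{\pm it\sqrt{H}}$, since $\tfrac{\sin(t\sqrt{H})}{\sqrt{H}}g=\tfrac{1}{2i}\bigl(e^{it\sqrt{H}}-e^{-it\sqrt{H}}\bigr)H^{-\frac12}g$ and $\|H^{-\frac12}g\|_{\dot H^{\frac12}}\sim\|g\|_{\dot H^{-\frac12}}$.

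The main obstacle is the first step: finding a factorization of the magnetic first-order part whose factors are \emph{simultaneously} $H_{0}$- and $H$-smooth. For the electric perturbation this is routine, but for the magnetic one a half-derivative must sit next to the non-smooth coefficient $A$, which is exactly why the refined condition $\bra{x}^{1+\epsilon}A\in\dot H^{\frac12}_{2n}\cap C^{0}$ is imposed rather than mere pointwise decay, and why one must use the full strength of the Erdogan--Goldberg--Schlag resolvent estimate rather than a soft perturbation argument: the potential being large, no smallness is available, so $I+\mathcal{D}_{0}W$ cannot be inverted by contraction and the perturbed resolvent bound is genuinely needed — this is the step that generalizes the small-potential treatment of \cite{DAnconaFanelli08-a}. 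Secondary but unavoidable points are the systematic use of the Sobolev equivalence between $H$ and $-\Delta$, and the restriction to the non-endpoint range $p>2$, forced by the Christ--Kiselev lemma.
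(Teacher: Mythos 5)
Your scheme is, in substance, the paper's own: Duhamel expansion of $u=e^{it\sqrt H}f$ around the free wave equation, free Strichartz plus the dual free smoothing estimate glued by the Christ--Kiselev lemma (hence non-endpoint only), Kato--Ponce together with weighted $L^{2}$ bounds for operators of the type \eqref{eq:L2bdd} to move the half-derivative past the rough coefficient $A$ (this is where $\bra{x}^{1+\epsilon}A\in\dot H^{\frac12}_{2n}\cap C^{0}$ enters), and the Erdogan--Goldberg--Schlag resolvent bound upgraded by Theorem~\ref{the:wave} to the perturbed smoothing estimate \eqref{eq:waveH}, which closes the loop. Your schematic factorization $W=\sum_k B_k^{*}\Gamma_k C_k$ is just a compressed description of the paper's explicit treatment of the terms $I,II,III$ via the operators $T_1,S,T_2$. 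One presentational remark: the detour through ``invertibility of $I+\mathcal{D}_0W$ on the smoothing space'' is not needed; once \eqref{eq:waveH} is available it is applied directly to the solution $u$ itself, with no inversion or Neumann series (and you correctly observe that the large-potential resolvent estimate, not a contraction argument, is the real input).

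The one genuine gap is the assertion that ``the same considerations provide the equivalence $\|H^{s}f\|\sim\||D|^{2s}f\|$ for $|s|\le\frac12$'', which you then use to get the second estimate from the first via $\|H^{-\frac12}g\|_{\dot H^{\frac12}}\sim\|g\|_{\dot H^{-\frac12}}$. Only one direction is cheap: Hardy's inequality gives $\|H^{\frac12}f\|\lesssim\|f\|_{\dot H^{1}}$ and, by interpolation, $\|H^{\frac14}f\|\lesssim\|f\|_{\dot H^{\frac12}}$, which is \eqref{eq:H14}. The reverse bound $\||D|^{\frac12}f\|\lesssim\|H^{\frac14}f\|$ does \emph{not} follow from Hardy when $A,V$ are large (the error terms $\|Af\|$, $\||V|^{\frac12}f\|$ carry large constants and cannot be absorbed), is not contained in the EGS resolvent estimate, and is neither proved nor used in the paper. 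Indeed the paper arranges the first estimate so that only the one-sided bound enters: the homogeneous term $\sin(t|D|)|D|^{-1}\sqrt{H}f$ is controlled through $|D|^{-\frac12}H^{\frac12}|D|^{-\frac12}=(H^{\frac14}|D|^{-\frac12})^{*}H^{\frac14}|D|^{-\frac12}$, bounded by \eqref{eq:H14} alone; and the second estimate is obtained not by a norm equivalence but by rerunning the Duhamel argument on $u=\sin(t\sqrt H)H^{-\frac12}g$ viewed as the solution with data $(0,g)$, whose free part needs only $\||D|^{-\frac12}g\|_{L^{2}}$. As written, your reduction of the second estimate rests on an unproven (and for large magnetic potentials genuinely nontrivial) two-sided Sobolev equivalence; you should either prove the reverse inequality under the stated hypotheses or restructure that step along the paper's lines.
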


\begin{remark}\label{rem:introendp}
  It is possible to prove the estimates also at the endpoint,
  using the ideas in \cite{IonescuKenig05-a},
  but this would lead us too far from the
  main goal of the paper.
\end{remark}

\begin{remark}\label{rem:introKG}
  By the same methods one can prove non-endpoint
  Strichartz estimates for the Klein-Gordon flow
  $e^{it \sqrt{H+\nu}}$
  perturbed with large potentials, provided $H+\nu\ge0$.
  We omit the details.
\end{remark}

\section{Abstract Kato smoothing}\label{sec:kato}

In this section we review
the basics of Kato's theory for the Schr\"{o}dinger
equation as developed
in \cite{Kato65-b} and \cite{KatoYajima89-a}
(see also \cite{ReedSimon78-a} and
\cite{Mochizuki10-a}), 
adding concise proofs when necessary,
and then we extend it to wave type equations.

Throughout this section $\mathcal{H}$, $\mathcal{H}_{1}$ are 
Hilbert spaces and $H$ is a selfadjoint operator on 
$\mathcal{H}$ with domain $D(H)$.
For $z\in \mathbb{C}\setminus \mathbb{R}$,
let $R(z)=(H-z)^{-1}$ be the resolvent operator of $H$, and
\begin{equation*}
  \Im R(z)=(2i)^{-1}(R(z)-R(\overline{z}))
\end{equation*} 
its imaginary part. 
A \emph{step function} $u(t):\mathbb{R}\to \mathcal{H}$
is a measurable function of bounded support
taking a finite number of values; measurability and
integrals of Hilbert-valued functions
are in the sense of Bochner. We also write for short
$L^{p}\mathcal{H}=L^{p}(\mathbb{R};\mathcal{H})$.
The following terminology was introduced in 
\cite{KatoYajima89-a}:

\begin{definition}\label{def:smoo}
  A closed operator $A$ from $\mathcal{H}$ to $\mathcal{H}_{1}$
  with dense domain $D(A)$ 
  is called:
  \\
  (i) $H$-\emph{smooth}, with constant $a$,
  if $\exists\epsilon_{0}$ such that for every 
  $\epsilon,\lambda\in \mathbb{R}$ with 
  $0<|\epsilon|< \epsilon_{0}$ the following
  uniform bound holds:
  \begin{equation}\label{eq:smoo}
    |(\Im R(\lambda+i \epsilon)A^{*}v,A^{*}v)_{\mathcal{H}_{1}}|
    \le
    a\|v\|_{\mathcal{H}_{1}}^{2},
    \qquad
    v\in D(A^{*});
  \end{equation}
  (ii) $H$-\emph{supersmooth}, with constant $a$, if 
  in place of \eqref{eq:smoo} one has
  \begin{equation}\label{eq:ssmoo}
    |(R(\lambda+i \epsilon)A^{*}v,A^{*}v)_{\mathcal{H}_{1}}|
    \le
    a\|v\|_{\mathcal{H}_{1}}^{2},
    \qquad
    v\in D(A^{*}).
  \end{equation}
\end{definition}

\begin{remark}\label{rem:normal}
  Note that \eqref{eq:smoo} implies that the range of
  $\Im R(z)A^{*}$ is contained in $D(A)$ and the
  (selfadjoint nonnegative) operator $A\Im R(z)A^{*}$ is a bounded
  operator on $\mathcal{H}_{1}$ 
  with norm equal to $a$. In a similar way,
  \eqref{eq:ssmoo} implies that the range of
  $R(z)A^{*}$ is contained in $D(A)$ and the
  operator $A R(z)A^{*}$ is a bounded
  operator on $\mathcal{H}_{1}$ 
  with norm not exceeding $2a$ (and not smaller than $a$). 
\end{remark}

\begin{theorem}\label{the:1}
  Let $A:\mathcal{H}\to \mathcal{H}_{1}$ be a closed operator
  with dense domain $D(A)$. 
  Then $A$ is $H$-smooth
  with constant $a$ if and only if, for any 
  $v\in \mathcal{H}$, one has
  $e^{-itH}v\in D(A)$ for almost every $t$
  and the following estimate holds:
  \begin{equation}\label{eq:smooest}
    \|Ae^{-itH}v\|_{L^{2}\mathcal{H}_{1}}
    \le
    2a^{\frac12}\|v\|_{\mathcal{H}}.
  \end{equation}
\end{theorem}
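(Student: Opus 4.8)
The plan is to run the classical Kato/Kato--Yajima argument, whose engine is the elementary spectral identity
\[
  \int_{\mathbb R} e^{i\lambda t}\,e^{-\epsilon|t|}\,e^{-itH}v\,dt \;=\; 2\,\Im R(\lambda+i\epsilon)v,
  \qquad v\in\mathcal H,\ \lambda\in\mathbb R,\ \epsilon>0,
\]
which one proves by splitting the integral at $t=0$ and evaluating the resulting one-sided integrals in the spectral variable. Combined with the Poisson representation $\Im R(\lambda+i\epsilon)=\pi\int_{\mathbb R}P_\epsilon(\mu-\lambda)\,dE_\mu$ ($E$ the spectral resolution of $H$, $P_\epsilon$ the normalized Poisson kernel), this says that $\lambda\mapsto\bigl(\Im R(\lambda+i\epsilon)w,w\bigr)$ is $\pi$ times the Poisson average of the nonnegative measure $\sigma_w:=d\|E_\mu w\|^2$. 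Since $P_\epsilon$ is an approximate identity as $\epsilon\downarrow0$, I will use throughout that \eqref{eq:smoo} is equivalent, up to the value of the constant, to the following: $\sigma_{A^*u}$ is absolutely continuous with density bounded by a fixed multiple of $a\|u\|^2$, uniformly for $u\in D(A^*)$.

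For the forward implication, assume $A$ is $H$-smooth, i.e.\ $\sigma_{A^*u}=\Phi_u\,d\lambda$ with $\|\Phi_u\|_{L^\infty}\lesssim a\|u\|^2$. Fix $v\in\mathcal H$ and $u\in D(A^*)$; then $t\mapsto(e^{-itH}v,A^*u)=\int_{\mathbb R}e^{-it\mu}\,d(v,E_\mu A^*u)$ is the Fourier transform of the finite measure $d(v,E_\mu A^*u)$. The Cauchy--Schwarz estimate for the spectral measure, $|d(v,E_\mu A^*u)|\le(d\sigma_v)^{1/2}(d\sigma_{A^*u})^{1/2}$, forces this measure to be absolutely continuous with density $\rho_{v,u}$ obeying $\|\rho_{v,u}\|_{L^2}^2\le\|\Phi_u\|_{L^\infty}\,\sigma_v(\mathbb R)\lesssim a\|u\|^2\|v\|^2$, so by Plancherel $t\mapsto(e^{-itH}v,A^*u)\in L^2(\mathbb R)$ with norm $\lesssim a^{1/2}\|u\|\,\|v\|$.

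The harder part will be upgrading this weak bound to \eqref{eq:smooest}, since a priori one does not even know that $e^{-itH}v\in D(A)$. Here I will exploit that $A$ is closed: for a scalar step function $\psi$, the Bochner integral $z_\psi:=\int_{\mathbb R}\psi(t)e^{-itH}v\,dt$ satisfies $|(z_\psi,A^*u)|=\bigl|\int\psi(t)(e^{-itH}v,A^*u)\,dt\bigr|\lesssim a^{1/2}\|v\|\,\|\psi\|_{L^2}\|u\|$ for all $u\in D(A^*)$, hence $z_\psi\in D\bigl((A^*)^*\bigr)=D(A)$ with $\|Az_\psi\|\lesssim a^{1/2}\|v\|\,\|\psi\|_{L^2}$. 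Thus $\psi\mapsto Az_\psi$ extends to a bounded map $L^2(\mathbb R)\to\mathcal H_1$, represented by a kernel $G\in L^2\mathcal H_1$ with $\|G\|_{L^2\mathcal H_1}\lesssim a^{1/2}\|v\|$; taking $\psi$ to be normalized indicators of shrinking intervals $(t_0-h,t_0+h)$ and using closedness of $A$ to pass to the limit in $\frac1{2h}\int e^{-itH}v\,dt$ and $\frac1{2h}\int G\,dt$ simultaneously (Lebesgue differentiation) identifies $e^{-it_0H}v\in D(A)$ and $Ae^{-it_0H}v=G(t_0)$ for a.e.\ $t_0$, which is \eqref{eq:smooest}. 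The precise factor $2$ in \eqref{eq:smooest} I expect to recover by bookkeeping the $2$ in the identity of the first paragraph together with the Plancherel normalization.

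For the converse, \eqref{eq:smooest} says exactly that $T\colon v\mapsto Ae^{-i\cdot H}v$ is bounded $\mathcal H\to L^2\mathcal H_1$ with $\|T\|\le2a^{1/2}$. A short computation gives $T^*\phi=\int_{\mathbb R}e^{itH}A^*\phi(t)\,dt$ on step functions $\phi$ valued in $D(A^*)$, so $TT^*$ acts on such $\phi$ as convolution by the operator-valued kernel $\tau\mapsto Ae^{-i\tau H}A^*$. I will then compare $\|TT^*\|=\|T\|^2\lesssim a$ with the essential supremum of the norms of the Fourier multiplier of that convolution: after Poisson regularization (to make the integral converge) the multiplier at frequency $\lambda$ equals $2A\,\Im R(\lambda+i\epsilon)A^*$, and since Poisson averaging does not increase the sup of the norms one gets $\|A\,\Im R(\lambda+i\epsilon)A^*\|\lesssim a$ as a bounded operator on $\mathcal H_1$; pairing against $u\in D(A^*)$ recovers \eqref{eq:smoo}. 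The single recurring technical obstacle throughout is the domain bookkeeping --- that $e^{-itH}v\in D(A)$ for a.e.\ $t$ and that $A$ may be commuted past the Bochner integrals defining $z_\psi$ and $T^*\phi$ --- which closedness of $A$ together with standard Lebesgue-differentiation arguments will handle.
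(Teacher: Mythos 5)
Your forward direction contains a genuine gap, and it sits exactly at the step you flag as "the harder part." The per‑$u$ bound $\|(e^{-itH}v,A^{*}u)\|_{L^{2}_{t}}\lesssim a^{1/2}\|u\|\,\|v\|$ only controls the \emph{operator norm} of the map $\psi\mapsto Az_{\psi}$ from $L^{2}(\mathbb{R})$ to $\mathcal{H}_{1}$; the assertion that such a bounded map is "represented by a kernel $G\in L^{2}\mathcal{H}_{1}$ with $\|G\|_{L^{2}\mathcal{H}_{1}}\lesssim a^{1/2}\|v\|$" is false, because $\|G\|_{L^{2}\mathcal{H}_{1}}$ is the Hilbert--Schmidt norm of that map, and a bounded operator from $L^{2}(\mathbb{R})$ into an infinite-dimensional $\mathcal{H}_{1}$ (e.g.\ an isometry onto a separable $\mathcal{H}_{1}$) need not be Hilbert--Schmidt. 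More fundamentally, the uniform weak bound you derived cannot by itself yield \eqref{eq:smooest}: the function $f(t)=e_{n}$ for $t\in[n,n+1)$, with $(e_{n})$ orthonormal, satisfies $\int|(f(t),u)|^{2}dt=\|u\|^{2}$ for every $u$, yet $\int\|f(t)\|^{2}dt=\infty$. So no Lebesgue differentiation argument can rescue the step, since the object $G$ whose a.e.\ values you want to identify with $Ae^{-itH}v$ has not been shown to exist in $L^{2}\mathcal{H}_{1}$ at all.

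The way the classical proof (which the paper does not reproduce: it simply cites Kato's Lemma 3.6 and Theorem 5.1 and Reed--Simon XIII.25) avoids this is by duality on the other side: one first proves the adjoint bound $\|\int e^{isH}A^{*}h(s)\,ds\|_{\mathcal{H}}\le C a^{1/2}\|h\|_{L^{2}\mathcal{H}_{1}}$ for $D(A^{*})$-valued step functions $h$, by expanding the square into $\iint(e^{i(s-s')H}A^{*}h(s),A^{*}h(s'))\,ds\,ds'$ and applying Parseval together with the hypothesis \eqref{eq:smoo}, which crucially has $A^{*}$ in \emph{both} slots (this is the same Laplace/Parseval scheme used in the paper's proof of Theorem \ref{the:2}). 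Then $G:=B^{*}v$, with $B$ the resulting bounded operator $L^{2}\mathcal{H}_{1}\to\mathcal{H}$, lies in $L^{2}\mathcal{H}_{1}$ with the correct norm by plain Hilbert-space duality, and your own identification argument (test against $h(t)=\psi(t)u$, a countable family of $u$'s, and $(A^{*})^{*}=A$ by closedness) gives $e^{-itH}v\in D(A)$ a.e.\ and $Ae^{-itH}v=G(t)$. Your first two reductions (the Poisson-kernel characterization of smoothness and the absolute continuity of $d(v,E_{\mu}A^{*}u)$ with $L^{2}$ density) are fine but become unnecessary once this duality is used. Your converse is essentially the standard $TT^{*}$/multiplier argument and can be made rigorous, though a shorter route is available: $(\Im R(\lambda+i\epsilon)A^{*}u,A^{*}u)=\epsilon\|R(\lambda+i\epsilon)A^{*}u\|^{2}$, write $R(\lambda+i\epsilon)A^{*}u=i\int_{0}^{\infty}e^{i(\lambda+i\epsilon)t}e^{-itH}A^{*}u\,dt$, pair with $\phi$, move $A$ across using the assumed a.e.\ membership, and apply Cauchy--Schwarz in $t$; either way one recovers smoothness with a fixed multiple of $a$ rather than $a$ itself, a constant bookkeeping issue you would still need to settle to match the statement literally.
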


\begin{proof}
  This is proved in Lemma 3.6 and Theorem 5.1 of
  \cite{Kato65-b} (see also Theorem XIII.25 in
  \cite{ReedSimon78-a}).
\end{proof}

Thus $H$-smoothness is equivalent to the smoothing
estimate \eqref{eq:smooest} for the homogeneous
flow $e^{-itH}$. By similar methods, it is not difficult to see
that $H$-supersmoothness is equivalent to a
\emph{nonhomogeneous} estimate
(compare also with \cite{Mochizuki10-a}):

\begin{theorem}\label{the:2}
  Let $A:\mathcal{H}\to \mathcal{H}_{1}$ be a closed operator
  with dense domain $D(A)$. 
  Assume $A$ is $H$-supersmooth with constant $a$.
  Then
  $e^{-itH}v\in D(A)$ for almost any $t\in \mathbb{R}$
  and any $v\in \mathcal{H}$; moreover,
  for any step function 
  $h(t):\mathbb{R}\to D(A^{*})$,
  $Ae^{-i(t-s)H}A^{*}h(s)$ is Bochner integrable in $s$ 
  over $[0,t]$ (or $[t,0]$)
  and satisfies, for all $\epsilon\in(-\epsilon_{0},\epsilon_{0})$,
  the estimate
  \begin{equation}\label{eq:ssmooest}
    \textstyle
    \|e^{-\epsilon t}\int_{0}^{t}Ae^{-i(t-s)H}A^{*}h(s)ds\|
    _{L^{2}\mathcal{H}_{1}}
    \le 2a
    \|e^{-\epsilon t}h(t)\|_{L^{2}\mathcal{H}_{1}}.
  \end{equation}
  Conversely, if \eqref{eq:ssmooest} holds, then $A$
  is $H$-supersmooth with constant $2a$.
\end{theorem}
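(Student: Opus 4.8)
The plan is to diagonalise the Duhamel map by a Fourier transform in $t$. Once the weight $e^{-\epsilon t}$ is present the propagator $e^{-i(t-s)H}$ is exponentially damped in $t-s$, so its one-sided $t$-transform is a genuine $\mathcal{H}$-valued integral given by the resolvent, $\int_{0}^{\infty}e^{itz}e^{-itH}\,dt=-iR(z)$ for $\Im z>0$; feeding in the uniform bound $\|AR(z)A^{*}\|\le 2a$ from Remark \ref{rem:normal} turns \eqref{eq:ssmooest} into a pointwise-in-frequency multiplier inequality, which Plancherel closes. The converse is obtained by testing \eqref{eq:ssmooest} on exponential profiles that reproduce $AR(z)A^{*}$.

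First I would record the easy preliminaries. Taking imaginary parts in \eqref{eq:ssmoo} shows that $H$-supersmoothness with constant $a$ forces $H$-smoothness with constant $a$; hence Theorem \ref{the:1} gives, for every $v\in\mathcal{H}$, that $e^{-itH}v\in D(A)$ for a.e.\ $t$, together with \eqref{eq:smooest}. If $h$ is a step function valued in $D(A^{*})$ then $A^{*}h$ is a step function valued in $\mathcal{H}$, so $s\mapsto e^{-i(t-s)H}A^{*}h(s)$ is strongly measurable and bounded on the finite interval $[0,t]$; applying $A$ (legitimate for a.e.\ $s$ by Theorem \ref{the:1}) and using \eqref{eq:smooest} one checks Bochner integrability of $s\mapsto Ae^{-i(t-s)H}A^{*}h(s)$, so the left-hand side of \eqref{eq:ssmooest} is well defined.

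For the estimate I would argue for $\epsilon>0$ first; the case $\epsilon<0$ follows by the substitution $t\mapsto -t$, $H\mapsto -H$ (which turns $R(z)$ into $-R(-z)$, still of norm $\le 2a$ by \eqref{eq:ssmoo}) and $\epsilon=0$ by letting $\epsilon\to 0^{+}$ in the uniform bound. Write $V(t)=e^{-\epsilon t}\int_{0}^{t}Ae^{-i(t-s)H}A^{*}h(s)\,ds$ and $h_{\epsilon}(t)=e^{-\epsilon t}h(t)$; the two-sided structure of $\int_{0}^{t}$ (causal for $t>0$, anti-causal for $t<0$) is accommodated by splitting $h$ at $s=0$ and running the two half-line transforms with $\Im z=\pm\epsilon$. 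Taking the $t$-Fourier transform of $V$, pulling $A$ out (justified by closedness of $A$ and \eqref{eq:smooest}) and carrying out the $s$- and $\tau=t-s$-integrations — absolutely convergent by \eqref{eq:smooest} and the support of $h$ — one identifies the inner integral via $\int_{0}^{\infty}e^{itz}e^{-itH}\,dt=-iR(z)$ and obtains $\widehat V(\lambda)=-i\,AR(\lambda+i\epsilon)A^{*}\,\widehat{h_{\epsilon}}(\lambda)$ in a suitable convention for the Fourier transform; hence $\|\widehat V(\lambda)\|_{\mathcal{H}_{1}}\le 2a\,\|\widehat{h_{\epsilon}}(\lambda)\|_{\mathcal{H}_{1}}$ pointwise by Remark \ref{rem:normal}, and Plancherel gives \eqref{eq:ssmooest}. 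For the converse I would extend \eqref{eq:ssmooest} from step functions to every $h$ with $h_{\epsilon}\in L^{2}\mathcal{H}_{1}$ by density, then insert $h_{T}(s)=\one{[0,T]}(s)e^{(\epsilon-i\lambda)s}v$, with $v\in D(A^{*})$ and $\lambda\in\mathbb{R}$: an explicit computation shows that the weighted Duhamel integral equals $-ie^{-i\lambda t}AR(\lambda+i\epsilon)A^{*}v$ on $[0,T]$ up to an error whose $L^{2}\mathcal{H}_{1}$ norm is bounded uniformly in $T$, while $\|e^{-\epsilon\,\cdot}h_{T}\|_{L^{2}\mathcal{H}_{1}}=T^{1/2}\|v\|_{\mathcal{H}_{1}}$; dividing the two sides of \eqref{eq:ssmooest} by $T^{1/2}$ and letting $T\to\infty$ yields $\|AR(\lambda+i\epsilon)A^{*}v\|_{\mathcal{H}_{1}}\le 2a\|v\|_{\mathcal{H}_{1}}$, which is \eqref{eq:ssmoo} with constant $2a$.

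The main obstacle is rigour in these Fourier/Laplace manipulations: $A$ and $A^{*}$ are unbounded, $e^{-itH}v$ lies in $D(A)$ only for a.e.\ $t$, and $e^{-itH}$ does not decay. These are exactly the features that dictate the hypotheses in the statement: working with step functions valued in $D(A^{*})$ makes $A^{*}h$ defined pointwise; keeping $\epsilon\neq 0$ makes every transform a genuine $L^{1}\cap L^{2}$ object and keeps $R$ at distance $|\epsilon|$ from the real axis; and Theorem \ref{the:1} supplies the $L^{2}$-control of $Ae^{-itH}$ on the vectors produced by the $s$-integrations. So the difficulty is careful bookkeeping rather than a new idea, paralleling the argument behind Theorem \ref{the:1}.
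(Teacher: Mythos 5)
Your proposal is correct and follows essentially the same route as the paper: a Laplace/Fourier transform in $t$ (with the $e^{-\epsilon t}$ damping and the two half-line branches $\Im z=\pm\epsilon$), the identification of the transformed Duhamel term with $AR(z)A^{*}$ applied to the transformed datum, the uniform bound $\|AR(z)A^{*}\|\le 2a$ from Remark \ref{rem:normal}, and Parseval/Plancherel, with Theorem \ref{the:1}, closedness of $A$ and a Hille-type argument supplying the domain and ``pull $A$ inside the integral'' steps. The only differences are cosmetic — you apply the multiplier bound pointwise in frequency and then Plancherel where the paper works in weak form against a second step function $g$, and your converse tests \eqref{eq:ssmooest} on truncated exponentials $\one{[0,T]}(s)e^{(\epsilon-i\lambda)s}v$ with a $T\to\infty$ limit instead of the paper's separated data $\sigma(t)h_{0}$ with arbitrary $\sigma\in L^{2}$; both give the same conclusion.
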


\begin{proof}
  Assume $A$ is $H$-supersmooth, hence in particular $H$-smooth.
  Then from the previous Theorem we know that $e^{-itH}v\in D(A)$
  for a.e.~$t$.
  Denote for a function $v(t):\mathbb{R}\to\mathcal{H}$
  its Laplace transforms by
  \begin{equation*}
    \textstyle
    \widetilde{v}_{+}(z)=    
    \int_{0}^{+\infty}e^{izt}v(t)dt,\quad
    \Im z>0
    \qquad\text{and}\qquad 
    \widetilde{v}_{-}(z)=  
    \int_{-\infty}^{0}e^{izt}v(t)dt,\quad
    \Im z<0.
  \end{equation*}
  Note that both integrals converge if
  $\|v(t)\|_{\mathcal{H}}$ grows at most polynomially.
  In particulat, if $F(t)\in L^{\infty}(\mathbb{R};\mathcal{H})$
  and
  \begin{equation}\label{eq:defv}
    \textstyle
    v(t)=\int_{0}^{t}e^{-i(t-s)H}F(s)ds,
  \end{equation}
  we have the well known identities
  \begin{equation}\label{eq:transf}
    \widetilde{v}_{\pm}(z)=
    i^{-1}R(z)\widetilde{F}(z),
    \qquad
    \pm\Im z>0.
  \end{equation}
  Now
  let $h(t):\mathbb{R}\to D(A^{*})$ be a step function, define
  \begin{equation*}
    \textstyle
    v(t)=\int_{0}^{t}e^{-i(t-s)H}A^{*}h(s)ds
  \end{equation*}
  and consider the Laplace transforms of $v(t)$; we have
  by \eqref{eq:transf}
  \begin{equation*}
    \widetilde{v}_{\pm}(z)=i^{-1}R(z)\widetilde{A^{*}h}(z)
    =
    i^{-1}R(z)A^{*}\widetilde{h}(z)
    ,
    \qquad
    \pm\Im z>0
  \end{equation*}
  where $\widetilde{A^{*}h}=A^{*}\widetilde{h}$ follows by
  Hille's theorem 
  (Theorem 3.7.12 in \cite{HillePhillips74-a}). 

  Note also that $v(t)\in D(A)$ for all $t$. To see this,
  write explicitly
  \begin{equation*}
    \textstyle
    h(t)=\sum_{j=1}^{N}\one{E_{j}}(t)h_{j}
  \end{equation*}
  for some $h_{j}\in D(A^{*})$ and some measurable disjoint bounded
  sets $E_{j}\subset \mathbb{R}$ ($\one{E}$ denotes the
  characteristic function of $E$). Then we have, for fixed $t$,
  \begin{equation*}
    \textstyle
    v(t)=\sum\int_{0}^{t}\one{E_{j}}(s)e^{is H}v_{j}ds,
    \qquad
    v_{j}=e^{-itH}h_{j}
  \end{equation*}
  and we know that 
  $Ae^{isH}v_{j}\in L^{2}(0,t;\mathcal{H}_{1})\subset 
  L^{1}(0,t;\mathcal{H}_{1})$ by the previous Theorem.
  Thus by Hille's theorem we deduce that $v(t)\in D(A)$
  and
  \begin{equation}\label{eq:hille}
    \textstyle
    Av(t)=\int_{0}^{t}Ae^{-i(t-s)H}A^{*}h(s)ds.
  \end{equation}

  Now take a second step function
  $g(t):\mathbb{R}\to D(A^{*})$ and apply Parseval's
  identity: for all $\epsilon>0$,
  \begin{equation}\label{eq:pars}
    \textstyle
    2\pi
    \int_{-\infty}^{+\infty}(\widetilde{v}_{\pm}
          (\lambda\pm i \epsilon),
    \widetilde{A^{*}g}(\lambda\pm i \epsilon))_{\mathcal{H}}
       d \lambda
    =
    \pm\int_{0}^{\pm\infty}e^{-2\epsilon|t|}
    (v(t),A^{*}g(t))_{\mathcal{H}}dt.
  \end{equation}
  Using again Hille's theorem to prove 
  $\widetilde{A^{*}g}=A^{*}\widetilde{g}$, and the
  supersmoothness assumption (see also Remark \ref{rem:normal}) 
  we can write
  \begin{equation*}
    |(\widetilde{v}_{\pm}(z),\widetilde{A^{*}g}(z))_{\mathcal{H}}|
    =
    |(AR(z)A^{*}\widetilde{h}(z),\widetilde{g}(z))_{\mathcal{H}_{1}}|
    \le
    2a\|\widetilde{h}(z)\|_{\mathcal{H}_{1}}
    \|\widetilde{g}(z)\|_{\mathcal{H}_{1}}
  \end{equation*}
  and plugging into \eqref{eq:pars} we obtain
  with a last application of Parseval
  \begin{equation*}
  \begin{split}
    \textstyle
    |\int_{0}^{\pm\infty}
    e^{-2\epsilon|t|}
    (v(t),A^{*}g(t))_{\mathcal{H}}dt|
    \le &
    \textstyle
    4\pi
    a
    (\int_{-\infty}^{+\infty}
      \|\widetilde{h}(\lambda\pm i \epsilon)\|_{\mathcal{H}_{1}}^{2}
      d \lambda)^{\frac12}
    (\int_{-\infty}^{+\infty}
      \|\widetilde{g}(\lambda\pm i \epsilon)\|_{\mathcal{H}_{1}}^{2}
      d \lambda)^{\frac12}
    \\
    = &
    2a
    \|e^{-\epsilon|t|}h\|_{L^{2}(\mathbb{R}^{\pm};\mathcal{H}_{1})}
    \|e^{-\epsilon|t|}g\|_{L^{2}(\mathbb{R}^{\pm};\mathcal{H}_{1})}.
  \end{split}
  \end{equation*}
  Recalling that
  $v(t)\in D(A)$ we arrive at
  \begin{equation*}
    \textstyle
    |\int_{0}^{\pm\infty}
    (e^{-\epsilon t}Av(t),e^{-\epsilon t}g(t))_{\mathcal{H}_{1}}dt|
    \le 
    2a
    \|e^{-\epsilon t}h\|_{L^{2}(\mathbb{R}^{\pm};\mathcal{H}_{1})}
    \|e^{-\epsilon t}g\|_{L^{2}(\mathbb{R}^{\pm};\mathcal{H}_{1})}.
  \end{equation*}
  By density of step functions (and of $D(A^{*})$) this implies
  \begin{equation*}
    \textstyle
    \left\|e^{-\epsilon t}Av \right\|_{L^{2}\mathcal{H}_{1}}
    \le 2a\|e^{-\epsilon t}h\|_{L^{2}\mathcal{H}_{1}}
  \end{equation*}
  and recalling \eqref{eq:hille} we obtain
  \eqref{eq:ssmooest} (including the case $\epsilon=0$ which is
  obtained by taking the limit $\epsilon\to0$).

  We now prove the converse statement. Assume
  \eqref{eq:ssmooest} holds for any step function
  $h:\mathbb{R}^{+}\to D(A^{*})$; then by a simple
  approximation argument one sees that \eqref{eq:ssmooest}
  holds for any function $h(t)$ of the form
  \begin{equation*}
    \textstyle
    h(t)=\sigma(t)h_{0},\qquad
    h_{0}\in D(A^{*}),\quad
    \sigma(t)\in L^{2}(\mathbb{R}^{+});
  \end{equation*}
  thus, writing
  \begin{equation*}
    \textstyle
    w(t)=\int_{0}^{t}Ae^{-i(t-s)H}A^{*}h(s)ds
  \end{equation*}
  and applying \eqref{eq:ssmooest} we get
  \begin{equation*}
    \textstyle
    \|e^{-\epsilon t}w(t)\|_{L^{2}(\mathbb{R}^{+};\mathcal{H}_{1})}
    \le 2a \|e^{-\epsilon t}h\|_{L^{2}\mathcal{H}_{1}}.
  \end{equation*}
  By Parseval we have then
  \begin{equation*}
    (2\pi)^{\frac12}
    \|\widetilde{w}_{+}(\lambda+i \epsilon)\|
    _{L^{2}_{\lambda}\mathcal{H}_{1}}=
    \|e^{-\epsilon t}w(t)\|_{L^{2}(\mathbb{R}^{+};\mathcal{H}_{1})}
    \le 2a \|e^{-\epsilon t}h\|_{L^{2}\mathcal{H}_{1}}
    =
    (2\pi)^{\frac12}2a
    \|\widetilde{h}(\lambda+i \epsilon)\|
    _{L^{2}_{\lambda}\mathcal{H}_{1}}
  \end{equation*}
  so that
  \begin{equation}\label{eq:part}
    \|\widetilde{w}_{+}(\lambda+i \epsilon)\|
    _{L^{2}_{\lambda}\mathcal{H}_{1}}\le
    2a
        \|\widetilde{h}(\lambda+i \epsilon)\|
        _{L^{2}_{\lambda}\mathcal{H}_{1}}.
  \end{equation}
  Recalling \eqref{eq:transf}, we have
  \begin{equation*}
    \widetilde{w}_{+}(\lambda+i \epsilon)=
    i^{-1}AR(\lambda+i \epsilon)A^{*}
    \widetilde{h}_{+}(\lambda+i \epsilon)
  \end{equation*}
  where
  \begin{equation*}
    \textstyle
    \widetilde{h}_{+}(\lambda+i \epsilon)=
    \int_{0}^{\infty}e^{i (\lambda+i \epsilon) t}
       \sigma(t)dt \cdot h_{0}
    =
    \widetilde{\sigma}_{+}(\lambda+i \epsilon)\cdot h_{0}
  \end{equation*}
  thus we can write
  \begin{equation*}
    \textstyle
    \|\widetilde{w}_{+}(\lambda+i \epsilon)\|
    _{L^{2}_{\lambda}\mathcal{H}_{1}}^{2}=
    \int_{-\infty}^{\infty}
       \phi(\lambda+i \epsilon)
       \left|
         \widetilde{\sigma}(\lambda+i \epsilon)
       \right|^{2}d \lambda,
      \qquad
      \phi(\lambda)=
      \|AR(\lambda+i \epsilon)A^{*}h_{0}\|_{\mathcal{H}_{1}}^{2}
  \end{equation*}
  Plugging into \eqref{eq:part} we obtain
  \begin{equation*}
    \textstyle
    \int_{-\infty}^{\infty}
       \phi(\lambda+i \epsilon)
       \left|
         \widetilde{\sigma}(\lambda+i \epsilon)
       \right|^{2}d \lambda
    \le
    (2a)^{2}\|h_{0}\|^{2}_{\mathcal{H}_{1}}
    \int_{-\infty}^{\infty}
    |\widetilde{\sigma}(\lambda+i \epsilon)|^{2}d \lambda
  \end{equation*}
  and recalling that $\sigma\in L^{2}$ is arbitrary, we deduce
  \begin{equation*}
    \sup_{\lambda\in \mathbb{R}}|\phi(\lambda+i \epsilon)|
    \le 2a \|h_{0}\|_{\mathcal{H}_{1}}
  \end{equation*}
  which is precisely the $H$-supersmoothness condition for $A$.
\end{proof}

\begin{remark}\label{rem:pettis}
  It would be possible to prove a more general result where
  $h$ is taken to be a generic function in $L^{2}\mathcal{H}_{1}$
  with values in $D(A^{*})$, instead of a step function.
  However this makes the proof of \eqref{eq:hille} 
  rather involved (in particular, the integral in \eqref{eq:hille}
  must be interpreted in the sense of Pettis).
  Since in concrete applications the final approximation
  step becomes trivial,
  we opted for a simpler statement expressed in terms of step
  functions.
\end{remark}

We now show that the smoothness property is inherited
by the square root of $H$:

\begin{theorem}\label{the:wave}
  Let $\nu\in \mathbb{R}$ with $H+\nu\ge0$ and
  $H+\nu$ injective.
  Assume $A$ and $A(H+\nu)^{-\frac14}$ are closed operators
  with dense domain from $\mathcal{H}$ to $\mathcal{H}_{1}$.
  \\
  (i) If $A$ is $H$-smooth with constant $a$, then
  $A(H+\nu)^{-\frac14}$ is $\sqrt{H+\nu}$-smooth
  with constant $C=(\pi+3)a$.
  In particular, we have the estimate
  \begin{equation}\label{eq:sqsmoo}
    \|Ae^{-it \sqrt{H+\nu}}v\|_{L^{2}\mathcal{H}_{1}}
    \le
    2C^{\frac12}\|(H+\nu)^{\frac14}v\|_{\mathcal{H}},
    \qquad
    \forall v\in D((H+\nu)^{\frac14}).
  \end{equation}
  (ii) If $A$ is $H$-supersmooth with constant $a$, then
  $A(H+\nu)^{-\frac14}$ is $\sqrt{H+\nu}$-supersmooth
  with constant $C=(\pi+3)a$.
  In particular, we have the estimate
  \begin{equation}\label{eq:sqssmoo}
    \textstyle
    \|\int_{0}^{t}Ae^{-i(t-s)\sqrt{H+\nu}}
      (H+\nu)^{-\frac12}A^{*}h(s)ds\|
      _{L^{2}\mathcal{H}_{1}}
    \le
    2(\pi+3)a\|h\| _{L^{2}\mathcal{H}_{1}}
  \end{equation}
  for any step function 
  $h:\mathbb{R}\to D((H+\nu)^{-\frac14}A^{*})$.
\end{theorem}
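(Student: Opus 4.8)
The plan is to reduce the whole statement to a one-variable estimate for the spectral measure of $H$. Since $R_{H+\nu}(z)=R_H(z-\nu)$, $A$ is $H$-(super)smooth with a given constant if and only if it is $(H+\nu)$-(super)smooth with the same constant; hence I may assume $\nu=0$, i.e.\ $H\ge0$ and injective. Write $B=\sqrt H$, with spectral family $\{E_\mu\}_{\mu\ge0}$, and $R_B(z)=(B-z)^{-1}$. Because $\Im R_B(z)$ and $R_B(z)$ are bounded, a graph-norm density argument lets me verify \eqref{eq:smoo} and \eqref{eq:ssmoo} for $AH^{-1/4}$ only for $v\in D(A^{*})$, with $w:=(AH^{-1/4})^{*}v=H^{-1/4}A^{*}v$; the identification $(AH^{-1/4})^{*}=H^{-1/4}A^{*}$ on a suitable core (automatic if $H$ is bounded below, and obtained via the spectral cut-offs $\one{[1/n,n]}(H)$ otherwise) is a routine point I will not dwell on. Put $u:=A^{*}v$ and let $\rho$ be the finite positive Borel measure $d\rho(\mu)=d(E_\mu u,u)$ on $[0,\infty)$.

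The next step is to read off the properties of $\rho$ that I need. From $H$-smoothness, $0\le\int \epsilon\,\big((\mu-\lambda)^{2}+\epsilon^{2}\big)^{-1}\,d\rho(\mu)\le a\|v\|^{2}$ for all $\lambda$ and all small $\epsilon>0$; integrating in $\lambda$ over an interval $(\alpha,\beta)$, exchanging the integrals, and letting $\epsilon\to0^{+}$ (dominated convergence) gives $\rho((\alpha,\beta))\le\tfrac a\pi\|v\|^{2}(\beta-\alpha)$, so $\rho$ is absolutely continuous with density $0\le f\le\tfrac a\pi\|v\|^{2}$; in particular $u\in D(H^{-1/4})$. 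Since $d(E_\mu^{H}w,w)=\mu^{-1/2}\,d\rho(\mu)$, the substitution $\mu=\sigma^{2}$ yields the convenient identity $d(E_\sigma^{B}w,w)=2f(\sigma^{2})\,d\sigma$. For part (ii) I additionally use that $z\mapsto(R_H(z)u,u)=\int(\mu-z)^{-1}\,d\rho(\mu)$ is holomorphic on $\mathbb C\setminus[0,\infty)$, satisfies $|(R_H(z)u,u)|\le a\|v\|^{2}$ on $0<|\Im z|<\epsilon_{0}$, and is bounded on $\{\Im z\ge\epsilon_{0}/2\}$; by the maximum principle, together with the bound on the line $\Im z=\epsilon_{0}/2$, it satisfies $|(R_H(z)u,u)|\le a\|v\|^{2}$ on all of $\mathbb C\setminus\mathbb R$, and a boundary limit extends this to $(-\infty,0)$. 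Letting $z\to0$ along $(-\infty,0)$ then gives $\int_0^{\infty}\sigma^{-1}f(\sigma^{2})\,d\sigma=\tfrac12\int_0^{\infty}\mu^{-1}\,d\rho(\mu)\le\tfrac a2\|v\|^{2}$.

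Now the two estimates follow. For (i), with $z=\zeta+i\delta$, $\delta>0$ (the case $\delta<0$ is symmetric),
\[
  |(\Im R_B(z)w,w)|=\int_0^{\infty}\frac{2\delta f(\sigma^{2})}{(\sigma-\zeta)^{2}+\delta^{2}}\,d\sigma
  \le\frac{2a}{\pi}\|v\|^{2}\int_{-\infty}^{\infty}\frac{\delta}{(\sigma-\zeta)^{2}+\delta^{2}}\,d\sigma=2a\|v\|^{2},
\]
so $AH^{-1/4}$ is $B$-smooth with constant $2a$, hence with constant $(\pi+3)a$. For (ii), again $z=\zeta+i\delta$, $\delta>0$: if $\zeta\le0$ then $|\sigma-z|\ge\sigma$ for $\sigma\ge0$, so $|(R_B(z)w,w)|\le\int_0^{\infty}2\sigma^{-1}f(\sigma^{2})\,d\sigma\le a\|v\|^{2}$; if $\zeta>0$, the identity $\frac{1}{\sigma-z}=\frac{2\sigma}{\sigma^{2}-z^{2}}-\frac{1}{\sigma+z}$ gives
\[
  (R_B(z)w,w)=2\int_0^{\infty}\frac{2\sigma f(\sigma^{2})}{\sigma^{2}-z^{2}}\,d\sigma-2\int_0^{\infty}\frac{f(\sigma^{2})}{\sigma+z}\,d\sigma
  =2(R_H(z^{2})u,u)-2\int_0^{\infty}\frac{f(\sigma^{2})}{\sigma+z}\,d\sigma,
\]
where $z^{2}\notin[0,\infty)$ since $\Im z^{2}=2\zeta\delta\ne0$; the first term is $\le2a\|v\|^{2}$ by the resolvent bound of the previous paragraph, and since $\zeta>0$ one has $|\sigma+z|\ge\sigma$, so the second is $\le a\|v\|^{2}$. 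Hence $AH^{-1/4}$ is $B$-supersmooth with constant $3a$, hence with constant $(\pi+3)a$. Finally, \eqref{eq:sqsmoo} is obtained by applying Theorem~\ref{the:1} to $\big(A(H+\nu)^{-1/4},\sqrt{H+\nu}\big)$, writing $v'=(H+\nu)^{1/4}v$, and commuting $(H+\nu)^{-1/4}$ through $e^{-it\sqrt{H+\nu}}$; \eqref{eq:sqssmoo} is obtained from Theorem~\ref{the:2} (case $\epsilon=0$) once one records that $\big(A(H+\nu)^{-1/4}\big)e^{-i(t-s)\sqrt{H+\nu}}\big(A(H+\nu)^{-1/4}\big)^{*}=Ae^{-i(t-s)\sqrt{H+\nu}}(H+\nu)^{-1/2}A^{*}$.

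The step I expect to be the real obstacle is part (ii): replacing $(\sigma-z)^{-1}$ by its modulus $\big((\sigma-\zeta)^{2}+\delta^{2}\big)^{-1/2}$ leaves a logarithmically divergent integral against a merely bounded density near $\sigma=\zeta$, so the singular behaviour must be absorbed into $(R_H(z^{2})u,u)$ — exactly the quantity that $H$-supersmoothness controls — and for this one has first to push the resolvent bound out of the strip $|\Im z|<\epsilon_{0}$ and onto the negative axis. The adjoint/core identification $(AH^{-1/4})^{*}=H^{-1/4}A^{*}$ is the other technical point, but it is harmless in the concrete applications.
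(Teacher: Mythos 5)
Your proof is correct, but it takes a genuinely different route from the paper's. The paper argues at the operator level: it writes $H^{-\frac14}(\sqrt H-z)^{-1}H^{-\frac14}\subset(I+zH^{-\frac12})R(z^{2})$, removes the singularity of $AzH^{-\frac12}R(z^{2})A^{*}$ at $\lambda=z^{2}$ by adding or subtracting $A(z^{2}+|z|^{2})R(z^{2})R(-|z|^{2})A^{*}$ via the resolvent identity, and bounds the remaining operators $Q_{\pm}(z)$ through Stone's formula, using only the uniform bound on $A\Im R(\lambda+i\epsilon)A^{*}$ together with the explicit scalar computation $I_{\pm}\le\pi$ (whence the constant $(\pi+3)a$). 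You instead fix $v$, pass to the scalar spectral measure $d\rho(\mu)=d(E_{\mu}A^{*}v,A^{*}v)$, and show that smoothness forces $\rho$ to be absolutely continuous with density at most $\frac{a}{\pi}\|v\|^{2}$ (a classical consequence of Kato smoothness), which gives the $\sqrt H$-smooth bound with constant $2a$ at once; for supersmoothness you first extend the resolvent bound from the strip $0<|\Im z|<\epsilon_{0}$ to all of $\mathbb{C}\setminus\mathbb{R}$ and then to $(-\infty,0)$ by Phragm\'en--Lindel\"of (the same device the paper uses for Theorem \ref{the:EGS}), deduce $\int\mu^{-1}d\rho\le a\|v\|^{2}$, and conclude via the partial-fraction identity $\frac{1}{\sigma-z}=\frac{2\sigma}{\sigma^{2}-z^{2}}-\frac{1}{\sigma+z}$, obtaining the constant $3a$. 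Your route buys sharper constants ($2a$, resp.\ $3a$, instead of $(\pi+3)a$) and makes explicit the extension of the hypothesis off the strip, a point the paper exploits only implicitly when it invokes the bounds at $z^{2}$ and at $-|z|^{2}$; the paper's operator-level argument, on the other hand, dispenses with the absolute-continuity lemma and handles (i) and (ii) with a single computation. The adjoint/core identification $(AH^{-\frac14})^{*}\supset H^{-\frac14}A^{*}$ that you defer is treated with the same degree of informality in the paper (which directly estimates $AH^{-\frac14}(\sqrt H-z)^{-1}H^{-\frac14}A^{*}$, cf.\ Remark \ref{rem:normal}), so it is not a gap relative to the paper's own standard; note also that your density bound shows $A^{*}v\in D(H^{-\frac14})$ for every $v\in D(A^{*})$, which is exactly what makes that restriction harmless.
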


\begin{proof}
  We give a detailed proof for case (ii) and at the end
  we shall list the (minor) modifications needed
  to prove (i). Note that by renaming the operator $H$, it is not
  restrictive to assume $\nu=0$.

  We need to prove a uniform bound in $\epsilon_{0}>\Im z>0$ 
  for the operators
  \begin{equation}\label{eq:0}
    AH^{-\frac14}(\sqrt{H}-z)^{-1}H^{-\frac14}A^{*}
    :\mathcal{H}_{1}\to \mathcal{H}_{1}.
  \end{equation}
  We have (the notation $S \subset T$ 
  means that the operator $T$ extends $S$)
  \begin{equation*}
    H^{-\frac14}(\sqrt{H}-z)^{-1}H^{-\frac14}
    \subset
    H^{-\frac12}(\sqrt{H}+z)(H-z^{2})^{-1}
    =
    (I+zH^{-\frac12})R(z^{2}).
  \end{equation*}
  By assumption we already know that $AR(z^{2})A^{*}$ is
  uniformly bounded with norm $\le a$, thus it remains to
  prove that
  \begin{equation}\label{eq:1}
    AzH^{-\frac12}R(z^{2})A^{*}:\mathcal{H}_{1}\to \mathcal{H}_{1}
  \end{equation}
  is uniformly bounded. We plan to estimate this
  operator using the spectral theorem. The obstruction to
  such an estimate is due to a singularity at $\lambda=z^{2}$
  in the spectral representation (see below); however, we
  can remove this singularity by adding (or subtracting)
  a suitable operator for which we have already an estimate.
  Indeed, using the resolvent identity
  \begin{equation*}
    R(z^{2})-R(-|z|^{2})=(z^{2}+|z|^{2})R(z^{2})R(|z|^{2})
  \end{equation*}
  we see, again by assumption, that the operator
  \begin{equation*}
    A(z^{2}+|z|^{2})R(z^{2})R(|z|^{2})A^{*}
  \end{equation*}
  is also bounded with norm $\le 2a$. 
  Adding or
  subtracting this from \eqref{eq:1}, we see that it
  is sufficient to prove that at least one of the operators
  \begin{equation*}
    Q_{\pm}(z)=
    A\left[
    zH^{-\frac12}\pm
    (z^{2}+|z|^{2})R(|z|^{2})
    \right]R(z^{2})A^{*}
  \end{equation*}
  is bounded uniformly in $z$. By Stone's
  formula we can write
  \begin{equation*}
    \textstyle
    Q_{\pm}(z)v=\lim_{\epsilon \downarrow0}
      \int_{0}^{\infty}
      \frac{1}{\lambda-z^{2}}
      \left(
         \frac{z}{\sqrt{\lambda}}\pm
         \frac{z^{2}+|z|^{2}}{\lambda+|z|^{2}}
      \right)
      A\Im R(\lambda+i \epsilon)A^{*}v d \lambda
  \end{equation*}
  and using one last time the assumption on the uniform bound
  $\le a$ for the norm of the operator
  $A\Im R(\lambda+i \epsilon)A^{*}$, we obtain
  \begin{equation*}
    \textstyle
    \|Q_{\pm}(z)v\|_{\mathcal{H}_{1}}
    \le
    \int_{0}^{\infty}
    \frac{1}{|\lambda-z^{2}|}
    \left|
      \frac{z}{\sqrt{\lambda}}\pm
      \frac{z^{2}+|z|^{2}}{\lambda+|z|^{2}}
    \right|
    d \lambda
    \cdot
    a\|v\|_{\mathcal{H}_{1}}.
  \end{equation*}
  We now check that the last integral is uniformly bounded
  for $\Im z>0$. Let $z=e^{i \theta}|z|$, with $|z|>0$
  and $0<\theta<\pi$, then by rescaling $\lambda=|z|^{2}\mu$ we have
  \begin{equation*}
    \textstyle
    I_{\pm} \equiv
    \int_{0}^{\infty}
    \frac{1}{|\lambda-z^{2}|}
    \left|
      \frac{z}{\sqrt{\lambda}}\pm
      \frac{z^{2}+|z|^{2}}{\lambda+|z|^{2}}
    \right|
    d \lambda
    =
    \int_{0}^{\infty}
    \frac{1}{|\mu-e^{2i \theta}|}
    \left|
      \frac{e^{i \theta}}{\sqrt{\mu}}\pm
      \frac{e^{2i \theta}+1}{\mu+1}
    \right|
    d \mu.
  \end{equation*}
  In the case $0<\theta\le \frac\pi2$, we
  prove that $I_{-}$ is bounded. Indeed,
  using the identity
  \begin{equation*}
    |e^{i \theta}(\mu+1)-\sqrt{\mu}(e^{2i \theta}+1)|=
    |\sqrt{\mu}e^{i \theta}-1|\cdot|\sqrt{\mu}-e^{i \theta}|=
    |\sqrt{\mu}-e^{i \theta}|^{2}
  \end{equation*}
  we obtain
  \begin{equation*}
    \textstyle
    I_{-}=
    \int_{0}^{\infty}
    \frac{1}{|\mu-e^{2i \theta}|}
    \frac{|\sqrt{\mu}-e^{i \theta}|^{2}}{\sqrt{\mu}(\mu+1)}
    d \mu.
  \end{equation*}
  We then simplify the first factor in $|\mu-e^{2i \theta}|=
  |\sqrt{\mu}-e^{i \theta}|\cdot|\sqrt{\mu}+e^{i \theta}|$
  to obtain
  \begin{equation*}
    \textstyle
    I_{-}=
    \int_{0}^{\infty}
    \frac{|\sqrt{\mu}-e^{i \theta}|}
       {\sqrt{\mu}(\mu+1)\cdot|\sqrt{\mu}+e^{i \theta}|}
    d \mu
    \le
    \int_{0}^{\infty}
    \frac{1}
       {\sqrt{\mu}(\mu+1)}
    d \mu=\pi.
  \end{equation*}
  Thus if $0<\theta\le\frac\pi2$ we have
  $\|Q_{-}(z)v\|_{\mathcal{H}_{1}}\le \pi a \|v\|_{\mathcal{H}_{1}}$;
  this implies a bound $(\pi+2)a$ for the norm of the operator
  \eqref{eq:1} for such $z$, and a bound $(\pi+3)a$ for the norm
  of \eqref{eq:0}.

  On the other hand, in the case $\frac{\pi}{2}<\theta<\pi$
  we prove that $I_{+}$ is bounded: we have
  \begin{equation*}
    \textstyle
    I_{+}=
    \int_{0}^{\infty}
    \frac{|(\mu+1)e^{i \theta}+\sqrt{\mu}(e^{2i \theta}+1)|}
    {\sqrt{\mu}(\mu+1)|\mu-e^{2i \theta}|}
    d\mu
  \end{equation*}
  and using the identity
  \begin{equation*}
    |(\mu+1)e^{i \theta}+\sqrt{\mu}(e^{2i \theta}+1)|=
    |\sqrt{\mu}+e^{i \theta}|\cdot|\sqrt{\mu}e^{i \theta}+1|=
    |\sqrt{\mu}+e^{i \theta}|^{2}
  \end{equation*}
  and simplifying the first factor in
  \begin{equation*}
    |\mu-e^{2i \theta}|=
    |\sqrt{\mu}+e^{i \theta}|\cdot|\sqrt{\mu}-e^{i \theta}|
  \end{equation*}
  we obtain
  \begin{equation*}
    \textstyle
    I_{+}=
    \int_{0}^{\infty}
    \frac{|\sqrt{\mu}+e^{i \theta}|}
    {\sqrt{\mu}(\mu+1)|\sqrt{\mu}-e^{i \theta}|}
    d\mu
    \le
    \int_{0}^{\infty}
    \frac{1}
       {\sqrt{\mu}(\mu+1)}
    d \mu=\pi.
  \end{equation*}
  Thus we obtain the same bound as before for the
  remaining values of $z$, proving that
  \eqref{eq:0} is $\sqrt{H}$-supersmooth with a constant
  $(\pi+3)a$. The final estimate
  \eqref{eq:sqssmoo} is a direct consequence of Theorem
  \ref{the:2}.

  It is easy to modify the previous argument for the proof of (i):
  indeed, the bound for
  \begin{equation*}
    AH^{-\frac14}\Im (\sqrt{H}-z)^{-1}H^{-\frac14}A^{*}
  \end{equation*}
  is reduced as before to bounds for the operators
  \begin{equation*}
    \widetilde{Q}_{\pm}(z)=
    A\Im\left\{\left[
    zH^{-\frac12}\pm
    (z^{2}+|z|^{2})R(|z|^{2})
    \right]R(z^{2})\right\}
    A^{*}
  \end{equation*}
  which follow exactly from the computations above.
  Finally applying Theorem \ref{the:1} we obtain
  the estimate
  \begin{equation*}
    \|Ae^{-it \sqrt{H+\nu}}(H+\nu)^{-\frac14}v\|
        _{L^{2}\mathcal{H}_{1}}
    \le
    2C^{\frac12}\|v\|_{\mathcal{H}},
  \end{equation*}
  whence we obtain \eqref{eq:sqsmoo}.
\end{proof}

It is not difficult to see that the injectivity assumption
on $H+\nu$ is not necessary, in the following sense. 
If $\ker(H+\nu)\neq\{0\}$,
denoting by $P$ be the orthogonal projection
onto $\mathcal{K}=\ker(H+\nu)^{\perp}$ and by 
$(H+\nu)^{-\frac14}$ the operator 
$(H+\nu)\vert_{\mathcal{K}}^{-\frac14}$, we see that
the operator $(H+\nu)^{-\frac14}P$
is closed and densely defined on $\mathcal{H}$.
Moreover,
if $A$ is $H$-smooth and $v\in\ker(H+\nu)$, 
from the smoothing estimate \eqref{eq:smooest} it follows 
immediately that $Av=0$. Thus we
obtain the following more general result:

\begin{corollary}\label{cor:waveP}
  Let $\nu\in \mathbb{R}$ with $H+\nu\ge0$ and let $P$
  be the orthogonal projection onto $\ker(H+\nu)^{\perp}$.
  Assume $A$ and $A(H+\nu)^{-\frac14}P$ are closed operators with
  dense domain from $\mathcal{H}$ to $\mathcal{H}_{1}$.
  \\
  (i) If $A$ is $H$-smooth with constant $a$, then
  $A(H+\nu)^{-\frac14}P$ is $\sqrt{H+\nu}$-smooth
  with constant $C=(\pi+3)a$.
  In particular, we have the estimate
  \begin{equation}\label{eq:sqsmoo2}
    \|Ae^{-it \sqrt{H+\nu}}v\|_{L^{2}\mathcal{H}_{1}}
    \le
    2C^{\frac12}\|(H+\nu)^{\frac14}v\|_{\mathcal{H}},
    \qquad
    \forall v\in D((H+\nu)^{\frac14}).
  \end{equation}
  (ii) If $A$ is $H$-supersmooth with constant $a$, then
  $A(H+\nu)^{-\frac14}P$ is $\sqrt{H+\nu}$-supersmooth
  with constant $C=(\pi+3)a$.
  In particular, we have the estimate
  \begin{equation}\label{eq:sqssmoo2}
    \textstyle
    \|\int_{0}^{t}Ae^{-i(t-s)\sqrt{H+\nu}}
      (H+\nu)^{-\frac12}PA^{*}h(s)ds\|
      _{L^{2}\mathcal{H}_{1}}
    \le
    C\|h\| _{L^{2}\mathcal{H}_{1}}
  \end{equation}
  for any step function 
  $h:\mathbb{R}\to D((H+\nu)^{-\frac14}PA^{*})$.
\end{corollary}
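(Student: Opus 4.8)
The plan is to reduce everything to Theorem~\ref{the:wave} by passing to the orthogonal complement $\mathcal{K}=\ker(H+\nu)^{\perp}=\operatorname{ran}P$. As in the proof of Theorem~\ref{the:wave} we may rename $H$ and assume $\nu=0$, so that $H\ge 0$ and $H_{0}:=H|_{\mathcal{K}}$ is selfadjoint, nonnegative and \emph{injective} on $\mathcal{K}$; note that $H_{0}^{-\frac14}$ is precisely the operator written $H^{-\frac14}$ in the statement.

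The one fact that makes the reduction work is already isolated in the paragraph preceding the statement: since a nonzero constant function is never in $L^{2}(\mathbb{R};\mathcal{H}_{1})$, applying the smoothing estimate of Theorem~\ref{the:1} to a vector $v\in\ker H$ (for which $e^{-itH}v\equiv v$) forces $\ker H\subset D(A)$ and $Av=0$. Equivalently, $A=AP$, the range of $A^{*}$ lies in $\mathcal{K}$ (so $PA^{*}=A^{*}$), and the restriction $A_{0}:=A|_{\mathcal{K}}$ with domain $D(A)\cap\mathcal{K}$ is a closed, densely defined operator $\mathcal{K}\to\mathcal{H}_{1}$ with $A_{0}^{*}=A^{*}$ regarded as a map into $\mathcal{K}$. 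The same elementary remark, applied to the closed densely defined operator $B:=AH^{-\frac14}P$ --- which annihilates $\ker H$ because $H^{-\frac14}P$ does --- gives that the range of $B^{*}$ lies in $\mathcal{K}$ and that $B|_{\mathcal{K}}=A_{0}H_{0}^{-\frac14}$ is closed and densely defined $\mathcal{K}\to\mathcal{H}_{1}$.

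The next step is to transport (super)smoothness across this restriction. Because $\mathcal{K}$ and $\ker H$ reduce $H$, the resolvent splits as $R(z)=(-z)^{-1}(I-P)+R_{0}(z)P$ with $R_{0}(z)=(H_{0}-z)^{-1}$, and since $A^{*}$ has range in $\mathcal{K}$ one gets $AR(z)A^{*}=A_{0}R_{0}(z)A_{0}^{*}$ and likewise $A\Im R(z)A^{*}=A_{0}\Im R_{0}(z)A_{0}^{*}$; reading off Definition~\ref{def:smoo} (together with Remark~\ref{rem:normal}) this yields the clean equivalence: $A$ is $H$-(super)smooth with constant $a$ if and only if $A_{0}$ is $H_{0}$-(super)smooth with constant $a$. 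Now Theorem~\ref{the:wave} with $\nu=0$ applies verbatim to the triple $(H_{0},\mathcal{K},A_{0})$ and shows $A_{0}H_{0}^{-\frac14}$ is $\sqrt{H_{0}}$-(super)smooth with constant $C=(\pi+3)a$. Running the same splitting in reverse --- now with $\sqrt{H}$ (reduced by $\mathcal{K}=\ker(\sqrt{H})^{\perp}$) in place of $H$, with $B=AH^{-\frac14}P$ in place of $A$, $B|_{\mathcal{K}}=A_{0}H_{0}^{-\frac14}$, and using that $B^{*}$ has range in $\mathcal{K}$ --- gives that $AH^{-\frac14}P$ is $\sqrt{H}$-(super)smooth with the same constant $C$. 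The explicit estimates \eqref{eq:sqsmoo2} and \eqref{eq:sqssmoo2} then drop out of Theorems~\ref{the:1} and~\ref{the:2} applied to the pair $(\sqrt{H},\,AH^{-\frac14}P)$, after the elementary manipulations using $A=AP$, the commutation of $e^{-it\sqrt{H}}$ with $P$ and $H^{\pm\frac14}$, and the identity $H^{-\frac14}PH^{\frac14}=P$ on $D(H^{\frac14})$; finally one restores $\nu$.

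I do not expect a genuine obstacle: all the analytic content is inside Theorem~\ref{the:wave}, and what remains is bookkeeping --- checking that $A|_{\mathcal{K}}$ keeps closedness, dense domain and the right adjoint, and making sure the symbols $(H+\nu)^{-\frac14}P$ and $(H+\nu)^{-\frac12}P$ are consistently read as ``project onto $\mathcal{K}$, then use the injective functional calculus of $H+\nu$ there''. The only load-bearing observation, already highlighted before the statement, is that $H$-smoothness of $A$ forces $A$ to kill $\ker(H+\nu)$; once that is in hand the corollary is a routine descent to the injective case settled by Theorem~\ref{the:wave}.
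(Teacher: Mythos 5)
Your proposal is correct and follows essentially the same route as the paper: the load-bearing observation (smoothness forces $Av=0$ on $\ker(H+\nu)$, so everything descends to $\mathcal{K}=\ker(H+\nu)^{\perp}$ where Theorem \ref{the:wave} applies) is exactly the paper's argument, and your resolvent splitting $R(z)=(-z)^{-1}(I-P)+R_{0}(z)P$ and adjoint bookkeeping just make explicit what the paper compresses into ``restricting to the closed subspace and applying the previous Theorem.'' No gap; your version is merely more detailed than the published proof.
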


\begin{proof}
  The proof is obtained simply by restricting to the closed
  subspace $\ker(H+\nu)^{\perp}$ and applying the previous
  Theorem. Note that in case (i) we get the estimate
  \begin{equation*}
    \|Ae^{-it \sqrt{H+\nu}}(H+\nu)^{-\frac14}v\|
        _{L^{2}\mathcal{H}_{1}}
    \le
    2C^{\frac12}\|v\|_{\mathcal{H}},
    \qquad
    \forall v\in \ker(H+\nu)^{\perp}
  \end{equation*}
  which gives \eqref{eq:sqsmoo} for $v\in \ker(H+\nu)^{\perp}$,
  while for $v\in \ker(H+\nu)$ estimate \eqref{eq:sqsmoo}
  is trivial since the left hand side is identically 0
  by the remark preceding the Theorem.
\end{proof}

\begin{remark}\label{rem:fractional}
  By the same technique it is possible to prove
  smoothing estimates for the flows of the form
  $e^{it(H+\nu)^{1/m}}$, $m\ge0$ integer; this might
  be interesting especially for $m=4$ since the equation
  $iu_{t}+|D|^{\frac12}u=0$ is relevant in connection with
  water waves. We omit the details.
\end{remark}

\section{Applications}\label{sec:applications}

The results of the previous Section are rather general and
have a wide range of applications;
here we shall mention just a few. We use the notations
\begin{equation*}
  |D|^{\alpha}=(-\Delta)^{\frac a2},\qquad
  \bra{D}^{\alpha}=(1-\Delta)^{\frac\alpha2},\qquad
  \bra{x}=(1+|x|^{2})^{\frac12}.
\end{equation*}
The operators appearing in the following are
intended to be closed extension of the corresponding
operators defined on $C^{\infty}_{c}(\mathbb{R}^{n})$.

\subsection{Powers of the Laplacian}
\label{sub:powers_of_the_laplacian}


We begin by recalling a few smoothing
estimates for operators with constant coefficients.
Most of these results
are well known or can be obtained directly
via Fourier analysis; we review them
both for the purpose of illustration and for later use below.

In the Kato-Yajima paper \cite{KatoYajima89-a}, the case
$H=-\Delta$ on $\mathbb{R}^{n}$, $n\ge3$
is considered in detail, and in particular it is proved that
\begin{itemize}
    \item $|x|^{\alpha-1}|D|^{\alpha}$ for $0\le \alpha<1/2$
    is $(-\Delta)$-supersmooth 
    \item $\bra{x}^{-1}\bra{D}^{\frac12 }$
    is $(-\Delta)$-supersmooth 
    \item the multiplication operator by any
    $a(x)\in L^{n}(\mathbb{R}^{n})$
    is $(-\Delta)$-supersmooth 
\end{itemize}
This result is precised in \cite{Watanabe91-a} as follows:
for $n\ge2$,
\begin{itemize}
    \item $|x|^{-\beta}|D|^{\alpha-\beta}$ is
    $|D|^{2\alpha}$-supersmooth for
    $2\alpha>1$, $\beta\le \alpha$ and $\frac12<\beta<\frac n2$.
\end{itemize}
If insted of the supersmoothing property we restrict
to the weaker smoothing property,
several additional results are available
(\cite{Walther99-a}, \cite{RuzhanskySugimoto04-a} among
the others).
We mention in particular the following one, which was
proved in \cite{Ben-ArtziKlainerman92-a}, \cite{Chihara02-a}
and will be used below:
\begin{itemize}
    \item $\bra{x}^{-\frac12-\epsilon}|D|^{\frac12}$ is
    $(-\Delta)$-smooth for $n\ge2$, $\epsilon>0$.
\end{itemize}

The previous results, combined with Theorems 
\ref{the:1}, \ref{the:2}, \ref{the:wave},
give immediately several estimates
for time dependent flows.
By Watanabe's result and Theorem \ref{the:1} we get
\begin{equation*}
  \textstyle
  \||x|^{-\beta}|D|^{\alpha-\beta}e^{-it|D|^{2 \alpha}}f\|
     _{L^{2}(\mathbb{R}^{n+1})}
  \lesssim\|f\|_{L^{2}},\qquad
  2\alpha>1,\quad \beta\le \alpha,\quad \frac12<\beta<\frac n2.
\end{equation*}
This estimate is implied by the result in
\cite{FangWang11-a} (see also \cite{Hoshiro97-a})
\begin{equation*}
  \||x|^{-\beta}|D|^{\alpha-\beta}e^{-it|D|^{2 \alpha}}f\|
     _{L^{2}(\mathbb{R}^{n+1})}
  \lesssim\|\Lambda^{\frac12-\beta} f\|_{L^{2}},
  \qquad
  \Lambda=(1-\Delta_{\mathbb{S}^{n-1}})^{\frac12}
\end{equation*}
valid for any $\alpha>0$ and $\beta\in(\frac12,\frac n2)$, 
with a further
gain in angular regularity (note that $\frac12-\beta<0$).
However, our theory covers also the nonhomogeneous case.
Indeed, applying Theorem \ref{the:2} we get,
for the same range of parameters as in Watanabe's result,
the nonhomogeneous estimate
\begin{equation}\label{eq:result1}
  \textstyle
  \||x|^{-\beta}|D|^{\alpha-\beta}
     \int_{0}^{t}e^{-i(t-s)|D|^{2 \alpha}}F(s)ds\|
     _{L^{2}(\mathbb{R}^{n+1})}
  \lesssim
  \||x|^{\beta}|D|^{-\alpha+\beta}F\|
     _{L^{2}(\mathbb{R}^{n+1})}.
\end{equation}
Estimate \eqref{eq:result1} does not include the case of
the wave flow since $\alpha>1/2$ in \cite{Watanabe91-a};
however using Theorem \ref{the:wave}
we obtain that \eqref{eq:result1} holds for all $\alpha>1/4$.

Several other applications to constant 
coefficient equations are possible.
As a final example we consider Klein-Gordon equations,
which are covered by Theorem \ref{the:2} with $\mu=1$:
we obtain the smoothing estimate
\begin{equation}\label{eq:KG1}
  \||x|^{-\beta}|D|^{1-\beta}e^{it \bra{D}}f\|
    _{L^{2}(\mathbb{R}^{n+1})}
  \lesssim
  \|\bra{D}^{\frac12}f\|_{L^{2}}.
\end{equation}

\subsection{Potentials of critical decay}
\label{sub:potentials_of_critical_decay}

Our second example is a simplification of a proof in
\cite{BurqPlanchonStalker04-a}, where Strichartz
estimates were obtained for Schr\"{o}dinger and wave equations
of the form
\begin{equation*}
  iu_{t}-\Delta u+V(x)u=0,\qquad
  u_{tt}-\Delta u+V(x)u=0.
\end{equation*}
$V(x)$ is a real valued potential satisfying 
the following assumptions: there exist
$C>0$ and $c<\frac{(n-2)^{2}}{4}$, $n\ge3$, such that
\begin{equation}\label{eq:assVb}
  \frac{C}{|x|^{2}}\ge V(x)\ge -\frac{c}{|x|^{2}}
  \quad\text{and}\quad
   -\partial_{r}(|x|V(x))\ge -\frac{c}{|x|^{2}},
   \qquad
   \partial_{r}:=\frac{x}{|x|}\cdot \nabla_{x}
\end{equation}
(the actual assumptions are slightly more general).
The crucial step in \cite{BurqPlanchonStalker04-a} is Theorem 3,
claiming that
\begin{equation}\label{eq:supb}
  |x|^{-1}\ \text{is $(-\Delta+V)$-supersmoothing}.
\end{equation}
The standard Kato theory (Theorem \ref{the:1} here) 
gives a smoothing estimate for the Schr\"{o}dinger flow:
\begin{equation}\label{eq:smoob}
  \||x|^{-1}e^{it(-\Delta-V)}f\|
     _{L^{2}(\mathbb{R}^{n+1})}
  \lesssim\|f\|_{L^{2}}.
\end{equation}
Then the full set of Strichartz estimates follows
from \eqref{eq:smoob}, via the usual Rodnianski-Schlag
trick \cite{RodnianskiSchlag04-a}.

In order to apply the same procedure to the wave equation,
in \cite{BurqPlanchonStalker04-a} the following estimate
for the wave flow is proved:
\begin{equation}\label{eq:waveburq}
  \||x|^{-1}e^{it \sqrt{-\Delta-V}}f\|
     _{L^{2}(\mathbb{R}^{n+1})}
  \lesssim\|f\|_{\dot H^{\frac12}}.
\end{equation}
The proof in \cite{BurqPlanchonStalker04-a}
is rather involved (see also
the Errata relative to that paper); 
however, using the theory developed in Section \ref{sec:kato},
\eqref{eq:waveburq} follows directly by
the combination of \eqref{eq:supb} and \eqref{eq:sqsmoo}.

Note that we can prove additional estimates
which are apparently new. For instance, we
deduce the following homogeneous estimate
for the Klein-Gordon flow
\begin{equation}\label{eq:KG2}
  \||x|^{-1}e^{it \sqrt{1-\Delta-V}}f\|
     _{L^{2}(\mathbb{R}^{n+1})}
  \lesssim\|f\|_{H^{\frac12}},
\end{equation}
and nonhomogenous estimates
for all the flows, like
\begin{equation*}
  \textstyle
  \||x|^{-1}\int_{0}^{t}e^{-i(t-s)(-\Delta+V)}F(s)ds\|
     _{L^{2}(\mathbb{R}^{n+1})}
  \lesssim\||x|F\|
     _{L^{2}(\mathbb{R}^{n+1})}
\end{equation*}
for the Schr\"{o}dinger equation
and similar ones for the wave and Klein-Gordon equations.

\subsection{Wave equation with large magnetic potentials}
\label{sub:wave_equation_with_large}

We conclude the paper 
by proving Strichartz estimates for a wave
equation on $\mathbb{R}^{n}$, $n\ge3$, of the form
\begin{equation*}
  u_{tt}+Hu=0
\end{equation*}
where $H$ is a magnetic Schr\"{o}dinger operator
\begin{equation}\label{eq:H}
  H=(i \nabla+A(x))^{2}+V(x).
\end{equation}
We make the following assumptions: $A(x)\in \mathbb{R}^{n}$,
$V(x)\in \mathbb{R}$, moreover for some $C,\epsilon_{0}>0$
\begin{equation}\label{eq:ass1AV}
  |A(x)|+\bra{x}|V(x)|\le C \bra{x}^{-1-\epsilon_{0}},
\end{equation}
\begin{equation}\label{eq:ass2AV}
  \forall\  0<\epsilon_{1}<\epsilon_{0}, \quad
  \bra{x}^{1+\epsilon}A(x)\in \dot H^{\frac12}_{2n}(\mathbb{R}^{n})
  \cap
  C^{0}(\mathbb{R}^{n})
\end{equation}
where $\dot H^{\frac12}_{q}$ is the space with norm 
$\||D|^{\frac12}u\|_{L^{q}}$, $|D|=(-\Delta)^{\frac12}$;
finally, we assume that 0 is not an eigenvalue of $H$
nor a resonance, in the sense that
\begin{equation}\label{eq:reson}
  \bra{x}^{\frac{n-4}{2}-}u(x)\in L^{2}(\mathbb{R}^{n}),
  \qquad
  Hu=0
  \quad\implies\quad
  u \equiv0.
\end{equation}
Clearly in dimension $n\ge5$ it is sufficient to assume that
0 is not an eigenvalue.
Then the results in \cite{ErdoganGoldbergSchlag09-a}
imply the following resolvent estimate:

\begin{theorem}\label{the:EGS}
  Assume the operator $H$ in \eqref{eq:H}
  is selfadjoint, nonnegative, and
  satisfies \eqref{eq:ass1AV}, \eqref{eq:ass2AV}
  and \eqref{eq:reson}. Then the
  resolvent $R(z)=(H-z)^{-1}$ satisfies
  for $\delta>0$
  \begin{equation}\label{eq:reserd1}
    \|\bra{x}^{-\frac12-\delta}|D|^{\frac12}
      R(z)
      |D|^{\frac12}\bra{x}^{-\frac12-\delta}f
    \|_{L^{2}}
    +
    \|\bra{x}^{-1-\delta}
      R(z)
      \bra{x}^{-1-\delta}f
    \|_{L^{2}}
    \le C_{\delta}
    \|f\|_{L^{2}}
  \end{equation}
  with a constant uniform in 
  $z\in \mathbb{C}\setminus \mathbb{R}$.
\end{theorem}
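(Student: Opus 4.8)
The plan is to read off \eqref{eq:reserd1} from the uniform resolvent bounds established in \cite{ErdoganGoldbergSchlag09-a}; the statement decouples into two estimates that are proved there by somewhat different mechanisms, and I would treat them separately.

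For the second summand, $\|\bra{x}^{-1-\delta}R(z)\bra{x}^{-1-\delta}f\|_{L^{2}}\le C_{\delta}\|f\|_{L^{2}}$ uniformly in $z\in\mathbb{C}\setminus\mathbb{R}$, the tool is the limiting absorption principle for $H$ in weighted $L^{2}$ spaces. Setting $W:=H-(-\Delta)=2iA\cdot\nabla+i(\operatorname{div}A)+|A|^{2}+V$, assumption \eqref{eq:ass1AV} makes $W$ a short-range perturbation of $-\Delta$: its zeroth-order part decays like $\bra{x}^{-1-\epsilon_{0}}$, while the first-order part carries the decay of $A$, which can be shifted onto the two weights. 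Writing the resolvent identity $R(z)=R_{0}(z)-R_{0}(z)WR(z)$ and inserting the classical free bound $\|\bra{x}^{-1-\delta}R_{0}(z)\bra{x}^{-1-\delta}\|\le C$, one reduces matters to the uniform invertibility of an operator $I+K(z)$, where $K(z)$ is compact, built from $R_{0}(z)$, $W$ and the weights, and depends continuously on $z$ up to the real axis. This invertibility holds uniformly for $\Re z$ in compact subsets of $(0,\infty)$ by absence of positive embedded eigenvalues, is trivial as $|z|\to\infty$ since the perturbation is relatively small at high energy, and at the threshold $z=0$ is precisely what hypothesis \eqref{eq:reson} (no zero eigenvalue nor resonance, using $n\ge 3$) provides; all of this is carried out in \cite{ErdoganGoldbergSchlag09-a}.

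The first summand, $\|\bra{x}^{-\frac12-\delta}|D|^{\frac12}R(z)|D|^{\frac12}\bra{x}^{-\frac12-\delta}f\|_{L^{2}}\le C_{\delta}\|f\|_{L^{2}}$, is the delicate estimate and the heart of \cite{ErdoganGoldbergSchlag09-a}. Here I would start from the free Kato--Yajima estimate $\|\bra{x}^{-\frac12-\delta}|D|^{\frac12}R_{0}(z)|D|^{\frac12}\bra{x}^{-\frac12-\delta}\|\le C$ --- equivalently, $\bra{x}^{-\frac12-\delta}|D|^{\frac12}$ is $(-\Delta)$-supersmooth in the sense of Definition \ref{def:smoo}, as recalled in Section \ref{sub:powers_of_the_laplacian} --- and propagate it to $H$ through the resolvent identity, factoring $W$ and rearranging so that each occurrence of a (free or perturbed) weighted resolvent has the form just bounded, while the two outermost $|D|^{\frac12}\bra{x}^{-\frac12-\delta}$ factors are absorbed into the endpoints of the expansion. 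The essential difficulty is the first-order magnetic term $A\cdot\nabla$: keeping the resulting operators bounded on $L^{2}$ requires commuting a half-derivative past the coefficient $A$, and this is exactly what forces the extra hypothesis \eqref{eq:ass2AV}, namely $\bra{x}^{1+\epsilon}A\in\dot H^{\frac12}_{2n}\cap C^{0}$, which makes the pertinent fractional-Leibniz commutators bounded on $L^{2}$ via Sobolev embedding. With the identity set up, the same Fredholm and invertibility discussion as in the previous paragraph closes the argument, once more following \cite{ErdoganGoldbergSchlag09-a}.

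The main obstacle is therefore the first estimate, and inside it the interaction of the first-order magnetic term with the $|D|^{\frac12}$ weights together with the uniformity of the perturbed resolvent expansion down to the threshold $z=0$. Since all of this is imported from \cite{ErdoganGoldbergSchlag09-a}, the only work remaining on our side is the bookkeeping task of matching the weighted norms and parameter ranges used there with the symmetric form written in \eqref{eq:reserd1}, which I expect to be routine.
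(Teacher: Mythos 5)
Your outline of how \cite{ErdoganGoldbergSchlag09-a} prove their resolvent bounds (resolvent identity against $R_0(z)$, free Kato--Yajima estimate, Fredholm/compactness argument, absence of embedded eigenvalues, hypothesis \eqref{eq:reson} at the threshold, and \eqref{eq:ass2AV} to handle the half-derivative against the magnetic term) is a reasonable summary of their paper, but it is content the paper here simply cites: its proof of Theorem \ref{the:EGS} is one line invoking Theorem 1.2 of \cite{ErdoganGoldbergSchlag09-a} with $\alpha=1/2$. What you have missed is the one step that actually has to be supplied on this side, and it is not the ``bookkeeping'' you relegate to the last sentence. The result in \cite{ErdoganGoldbergSchlag09-a} is stated as a limiting absorption principle, i.e.\ uniform bounds for the boundary values $R(\lambda\pm i0)$ along the positive real axis, whereas \eqref{eq:reserd1} asserts a bound uniform over \emph{all} $z\in\mathbb{C}\setminus\mathbb{R}$. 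Your Fredholm discussion is likewise phrased in terms of $\Re z$ in compact subsets of $(0,\infty)$ and continuity ``up to the real axis'', so as written it does not produce bounds deep inside the half-planes either; note in particular that for the first summand the operator $\bra{x}^{-\frac12-\delta}|D|^{\frac12}R(z)|D|^{\frac12}\bra{x}^{-\frac12-\delta}$ is not trivially controlled when $|\Im z|$ is large, since the crude bound $\|R(z)\|\le|\Im z|^{-1}$ does not absorb the two half-derivatives for a large perturbation.

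The paper closes this gap by observing that $H\ge0$ makes $R(z)$ analytic in each of the upper and lower half-planes, so the weighted resolvent is an analytic operator-valued function there, bounded on the boundary by the limiting absorption estimate and controllable at infinity; the Phragm\'en--Lindel\"of (maximum) principle then propagates the boundary bound to the whole half-plane, giving uniformity in $z\in\mathbb{C}\setminus\mathbb{R}$. Either add this argument (or some substitute, e.g.\ redoing the perturbative expansion genuinely for complex $z$ with uniform control of $I+K(z)$ on the closed half-planes including a bound at infinity), or your proof establishes only the boundary version of \eqref{eq:reserd1}, which is weaker than what Theorem \ref{the:EGS} claims and than what the supersmoothness framework of Definition \ref{def:smoo} requires.
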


\begin{proof}
  The estimate follows from Theorem 1.2 in
  \cite{ErdoganGoldbergSchlag09-a}
  with the choice $\alpha=1/2$. The result is stated there
  in the form of a limit absorption principle i.e.,
  for $z=\lambda^{2}+i0$; the form given here follows
  from the remark that the spectrum is positive
  by assumption, and then
  by a simple application of the Phragm\'{e}n-Lindel\"{o}f
  principle on the upper resp.~lower complex plane.
\end{proof}

In the terminology of Kato-Yajima, 
Theorem \ref{the:EGS} states that
  $\bra{x}^{-\frac12-\delta}|D|^{\frac12}$
  and $\bra{x}^{-1-\delta}$ are
  $H$-supersmoothing operators.
Applying Theorem \ref{the:wave} we get the estimate
\begin{equation}\label{eq:waveH}
  \|\bra{x}^{-\frac12-\delta}|D|^{\frac12}
    e^{it \sqrt{H}}f\|_{L^{2}(\mathbb{R}^{n+1})}
  +
  \|\bra{x}^{-1-\delta}
    e^{it \sqrt{H}}f\|_{L^{2}(\mathbb{R}^{n+1})}
  \lesssim
  \|H^{\frac14}f\|_{L^{2}(\mathbb{R}^{n})}
\end{equation}
for the corresponding wave flow. Then we can prove
the full set of non-endpoint Strichartz estimates:

\begin{theorem}\label{the:wavestr}
  Let $n\ge3$.
  Assume the operator $H$ in \eqref{eq:H}
  is selfadjoint, nonnegative, and
  satisfies \eqref{eq:ass1AV}, \eqref{eq:ass2AV}
  and \eqref{eq:reson}. Then the wave flow satisfies the
  non-endpoint Strichartz estimates
  \begin{equation}\label{eq:wavestr}
    \textstyle
    \||D|^{\frac1q-\frac1p}e^{it \sqrt{H}}f\|_{L^{p}L^{q}}
    \lesssim
    \|f\|_{\dot H^{\frac12}},
  \end{equation}
  and
  \begin{equation}\label{eq:wavestr2}
    \textstyle
    \||D|^{\frac1q-\frac1p}
       \sin(t \sqrt{H}) H^{-\frac12}g
       \|_{L^{p}L^{q}}
    \lesssim
    \|g\|_{\dot H^{-\frac12}},
  \end{equation}
  for all $2<p\le \infty$, $2\le q<\frac{2(n-1)}{(n-3)}$
  with $2p^{-1}+(n-1)q^{-1}=(n-1)2^{-1}$.
\end{theorem}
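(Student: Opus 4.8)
The plan is to combine the smoothing estimate \eqref{eq:waveH} with a perturbative argument of Rodnianski–Schlag type to transfer the free wave Strichartz estimates to the perturbed flow $e^{it\sqrt H}$. First I would recall that the free half-wave propagator $e^{it|D|}$ on $\mathbb R^n$ satisfies
\begin{equation*}
  \||D|^{\frac1q-\frac1p}e^{it|D|}f\|_{L^pL^q}\lesssim\|f\|_{\dot H^{\frac12}}
\end{equation*}
for the admissible pairs in the statement (this is the classical Strichartz/Keel–Tao result for the wave equation in the non-endpoint range). Next I would use Duhamel's formula to write $e^{it\sqrt H}f$ in terms of the free flow and a remainder involving the perturbation. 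Writing $H=-\Delta+L$ where $L$ collects the magnetic and electric terms, $L = i\nabla\cdot A + A\cdot i\nabla + |A|^2 + V$, one has formally
\begin{equation*}
  e^{it\sqrt H}f = e^{it|D|}f - \int_0^t \frac{\sin((t-s)|D|)}{|D|}\, L\, e^{is\sqrt H}f\,ds
\end{equation*}
after passing through the second-order equation $u_{tt}+(-\Delta)u = -Lu$ with $u=e^{it\sqrt H}f$, $u_t(0)=i\sqrt H f$, matching initial data appropriately; the $\sqrt H f$ in the data is controlled because $\|f\|_{\dot H^{\frac12}}\simeq\|H^{\frac14}f\|_{L^2}$ under the spectral equivalence coming from the assumptions on $H$.

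The key point is then to estimate the Duhamel term. I would split $L = B_1^* B_2$ (schematically) so that the dangerous factors are absorbed by the weighted norms appearing in \eqref{eq:waveH}: the magnetic first-order part factors through $\bra x^{-\frac12-\delta}|D|^{\frac12}$ on one side and $\bra x^{-\frac12-\delta}$ (or $|D|^{\frac12}\bra x^{-\frac12-\delta}$) on the other, using assumption \eqref{eq:ass1AV}–\eqref{eq:ass2AV} on the decay and $\dot H^{\frac12}_{2n}$-regularity of $A$; the zeroth-order part $|A|^2+V$ factors through two copies of $\bra x^{-1-\delta}$ by \eqref{eq:ass1AV}. For the free Duhamel operator $\int_0^t \frac{\sin((t-s)|D|)}{|D|}F(s)\,ds$ I would invoke the inhomogeneous free Strichartz estimate together with the dual smoothing estimate $\|\bra x^{-\frac12-\delta}|D|^{\frac12}e^{it|D|}\|_{L^2\to L^2_{t,x}}<\infty$ and its Christ–Kiselev companion for the retarded integral, so that the $L^2_{t,x}$-weighted norm on the Duhamel source can be closed against the $L^2_{t,x}$-weighted norm of $e^{is\sqrt H}f$, which is exactly what \eqref{eq:waveH} provides. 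Combining these, the Duhamel term is bounded by $\|f\|_{\dot H^{\frac12}}$ and \eqref{eq:wavestr} follows. Estimate \eqref{eq:wavestr2} for $\sin(t\sqrt H)H^{-\frac12}g$ is obtained the same way, with $g\in\dot H^{-\frac12}$ and the data $g/\sqrt H$, using $\|H^{-\frac14}g\|_{L^2}\simeq\|g\|_{\dot H^{-\frac12}}$.

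The main obstacle I anticipate is the honest treatment of the \emph{first-order} magnetic term in the Duhamel iteration: unlike the electric potential, $L$ contains derivatives, so one cannot naively place two bounded weights around it, and one must carefully balance the half-derivative $|D|^{\frac12}$ between the two sides — using the smoothing estimate in the form with $|D|^{\frac12}\bra x^{-\frac12-\delta}$ on one factor and $\bra x^{-\frac12-\delta}|D|^{\frac12}$ on the other — and verify that $\bra x^{1+\epsilon}A\in\dot H^{\frac12}_{2n}\cap C^0$ is precisely what is needed to bound the multiplication operator $u\mapsto A\cdot u$ between the relevant weighted Sobolev spaces (via a Kato–Ponce / fractional Leibniz argument and Sobolev embedding $\dot H^{\frac12}_{2n}\hookrightarrow$ suitable $L^r$). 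A secondary technical point is justifying the Christ–Kiselev lemma in the non-endpoint range to pass from the full-line inhomogeneous estimate to the retarded one, which is standard since $p>2$. Once these are in place, the argument is a routine Neumann-series / fixed-point closure in the mixed space $L^pL^q\cap L^2(\bra x^{-\frac12-\delta}|D|^{\frac12}\,\cdot\,)\cap L^2(\bra x^{-1-\delta}\,\cdot\,)$.
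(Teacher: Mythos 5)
Your plan is essentially the paper's own proof: Duhamel expansion of $e^{it\sqrt H}f$ around the free half-wave flow, the free Strichartz estimates combined with the dual smoothing estimate and the Christ--Kiselev lemma for the retarded integral, a Kato--Ponce/weighted-$L^{2}$ treatment of the first-order magnetic term (balancing the half-derivative exactly as you anticipate), and closure via the perturbed smoothing estimate \eqref{eq:waveH}. Two minor remarks: only the one-sided bound $\|H^{\frac14}f\|_{L^{2}}\lesssim\|f\|_{\dot H^{\frac12}}$ (Hardy plus interpolation), not the equivalence you invoke, is what the argument uses and what is readily available; and no Neumann-series/fixed-point closure is needed, since \eqref{eq:waveH} bounds the perturbed flow directly so the Duhamel term is estimated in one pass.
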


\begin{remark}\label{rem:endp}
  It is possible to prove the endpoint estimate
  using the following nonhomogeneous mixed 
  Strichartz-smoothing estimate for the free wave flow
  ($n\ge4$)
  \begin{equation}\label{eq:IK}
    \textstyle
    \|\int_{0}^{t}e^{i(t-s)|D|}F(s)\|_{L^{2}L^{\frac{2(n-1)}{n-3}}}
    \lesssim
    \|\bra{x}^{\frac12+}|D|^{\frac12}F\|_{L^{2}L^{2}}.
  \end{equation}
  This estimate can be obtained by a modification of the
  techniques used in \cite{IonescuKenig05-a} for the
  corresponding result for the Schr\"{o}dinger equation.
  We prefer to omit the details here.
\end{remark}

\begin{proof}[Proof of Theorem \ref{the:wavestr}]
  The function $u=e^{it \sqrt{H}}f$ solves the Cauchy problem
  \begin{equation*}
    u_{tt}-\Delta u=
    -i \nabla \cdot(Au)-iA \cdot \nabla u-(|A|^{2}+V)u,
    \qquad
    u(0,x)=f,\quad
    u_{t}(0,x)=i \sqrt{H}f
  \end{equation*}
  hence we can write
  \begin{equation}\label{eq:tofree}
    e^{it \sqrt{H}}f=\cos(t|D|)f+i\sin(t|D|)|D|^{-1}\sqrt{H}f
               -i\widetilde{I}-i\widetilde{II}-\widetilde{III}
  \end{equation}
  where
  \begin{equation*}
    \textstyle
    \widetilde I=
    \int_{0}^{t}\sin((t-s)|D|)|D|^{-1}\nabla \cdot(Au)ds,
    \qquad
    \widetilde{II}=
    \int_{0}^{t}\sin((t-s)|D|)|D|^{-1}A \cdot\nabla uds
  \end{equation*}
  and
  \begin{equation*}
    \textstyle
    \widetilde{III}=
    \int_{0}^{t}\sin((t-s)|D|)|D|^{-1}(|A|^{2}+V) uds.
  \end{equation*}
  In the following we shall estimate as usual the more general
  operators
  \begin{equation*}
    \textstyle
    I=\int_{0}^{t}e^{i(t-s)|D|}|D|^{-1}\nabla \cdot(Au)ds,
    \qquad
    II=\int_{0}^{t}e^{i(t-s)|D|}|D|^{-1}A \cdot\nabla uds
  \end{equation*}
  and
  \begin{equation*}
    \textstyle
    III=\int_{0}^{t}e^{i(t-s)|D|}|D|^{-1}(|A|^{2}+V) uds
  \end{equation*}
  since the estimates for $I,II,III$ imply
  the corresponding estimates for
  $\widetilde{I},\widetilde{II},\widetilde{III}$.

  Besides the fundamental smoothing estimates
  \eqref{eq:waveH},
  we need the following tools:
  the homogeneous Strichartz estimates for the free
  wave equation
  \begin{equation}\label{eq:freestrich}
    \||D|^{\frac1q-\frac1p}e^{it |D|}f\|_{L^{p}L^{q}}
    \lesssim
    \||D|^{\frac12}f\|_{L^{2}},
  \end{equation}
  the dual smoothing estimate for the free wave equation
  \begin{equation}\label{eq:dualsm}
    \textstyle
    \|\int e^{-is|D|}F(s)ds\|_{L^{2}}
    \lesssim
    \|\bra{x}^{\frac12+}F\|_{L^{2}L^{2}},
  \end{equation}
  and the fact that the operators
  \begin{equation}\label{eq:L2bdd}
    \bra{x}^{\frac12+\epsilon}|D|^{-\frac12}
    \bra{x}^{-\frac12-\epsilon'}|D|^{\frac12}
    \quad\text{and}\quad 
    \bra{x}^{\frac12+\epsilon}|D|^{\frac12}
    \bra{x}^{-\frac12-\epsilon'}|D|^{-\frac12}
  \end{equation}
  are bounded on $L^{2}$ provided 
  $0<\epsilon<\epsilon'$ are small enough. 
  Estimates \eqref{eq:freestrich} are well known;
  \eqref{eq:dualsm} is the dual of
  \begin{equation*}
    \|\bra{x}^{-\frac12-\epsilon}e^{it|D|}f\|_{L^{2}L^{2}}
    \lesssim\|f\|_{L^{2}}
  \end{equation*}
  which follows from Theorem \ref{the:wave} and the
  $-\Delta$-smoothness of 
  $\bra{x}^{-\frac12-\epsilon}|D|^{\frac12}$ proved
  in Section \ref{sub:powers_of_the_laplacian}.
  Finally, the $L^{2}$ boundedness of \eqref{eq:L2bdd}
  can be proved by interpolation, or a direct proof
  can be found in Lemma 6.2 in \cite{ErdoganGoldbergSchlag09-a}.

  Note also that
  by Hardy's inequality, since $|A|\lesssim|x|^{-1}$
  and $|V|\lesssim|x|^{-2}$, we have
  \begin{equation*}
    \|H^{\frac12}f\|_{L^{2}}^{2}
    \le
    \|\nabla u+i Au\|_{L^{2}}^{2}
    +
    \||V|^{\frac12}u\|^{2}_{L^{2}}
    \lesssim\|f\|_{\dot H^{1}}^{2}.
  \end{equation*}
  Thus by interpolation we obtain
  \begin{equation}\label{eq:H14}
    \|H^{\frac14}f\|_{L^{2}}\lesssim\|f\|_{\dot H^{\frac12}}.
  \end{equation}
  Equivalently, the operator 
  $H^{\frac14}|D|^{-\frac12}$ is bounded on $L^{2}$ and hence
  also the operator
  \begin{equation}\label{eq:HD}
    |D|^{-\frac12}H^{\frac12}|D|^{-\frac12}=
    (H^{\frac14}|D|^{-\frac12})^{*}H^{\frac14}|D|^{-\frac12}
  \end{equation}
  is $L^{2}$ bounded.

  The estimate of the homogenous terms in \eqref{eq:tofree}
  follows directly from \eqref{eq:freestrich}; in particular,
  the second term can be estimated by
  \begin{equation*}
    \lesssim
    \||D|^{\frac12}|D|^{-1}\sqrt{H}f\|_{L^{2}}
    =
    \||D|^{-\frac12}H^{\frac12}|D|^{-\frac12}
        |D|^{\frac12}f\|_{L^{2}}
    \lesssim
    \||D|^{\frac12}f\|_{L^{2}}
  \end{equation*}
  using the boundedness of \eqref{eq:HD}.

  We now focus on the main terms $I$, $II$, $III$. By the usual
  application of Christ-Kiselev's Lemma, in the non-endpoint case
  it is sufficient to
  estimate the untruncated integrals which can be split as
  \begin{equation*}
    \textstyle
    e^{it|D|}\int e^{-is|D|}F(s)ds.
  \end{equation*}
  Using first the homogeneous Strichartz estimate
  \eqref{eq:freestrich} to bound $e^{it|D|}$,
  then the dual smoothing estimate \eqref{eq:dualsm},
  we obtain
  \begin{equation*}
    \textstyle
    \||D|^{\frac1q-\frac1p}\int e^{i(t-s)|D|}F(s)ds\|_{L^{p}L^{q}}
    \lesssim
    \|\bra{x}^{\frac12+\epsilon}|D|^{\frac12}F\|_{L^{2}L^{2}}.
  \end{equation*}
  For the term $I$ this gives
  \begin{equation*}
    \||D|^{\frac1q-\frac1p}I\|_{L^{p}L^{q}}
    \lesssim
    \|\bra{x}^{\frac12+\epsilon}|D|^{-\frac12}
      \nabla \cdot(Au)\|_{L^{2}L^{2}}.
  \end{equation*}
  By the boundedness of the Riesz operator $\nabla|D|^{-1}$
  in weighted $L^{2}$ spaces (note that $\bra{x}^{\frac12+}$
  is an $A_{2}$ weight) we get
  \begin{equation*}
    \lesssim
    \|\bra{x}^{\frac12+\epsilon}|D|^{\frac12}
      (Au)\|_{L^{2}L^{2}}.
  \end{equation*}
  Writing for $\epsilon'>\epsilon$
  \begin{equation*}
    \bra{x}^{\frac12+\epsilon}|D|^{\frac12}
    =
    \bra{x}^{\frac12+\epsilon}|D|^{\frac12}
    \bra{x}^{-\frac12-\epsilon'}|D|^{-\frac12}
    \cdot
    |D|^{\frac12}\bra{x}^{\frac12+\epsilon'}
  \end{equation*}
  and recalling \eqref{eq:L2bdd}, we arrive at
  \begin{equation*}
    \lesssim
    \||D|^{\frac12}
      (\bra{x}^{\frac12+\epsilon'} Au)\|_{L^{2}L^{2}}
    =
    \||D|^{\frac12}
      (\bra{x}^{1+2\epsilon'} A \cdot
       \bra{x}^{-\frac12-\epsilon'}u)\|_{L^{2}L^{2}}.
  \end{equation*}
  The Kato-Ponce inequality gives
  \begin{equation*}
  \begin{split}
    \||D|^{\frac12}
      (\bra{x}^{1+2\epsilon'} A \cdot
       \bra{x}^{-\frac12-\epsilon'}u)\|_{L^{2}}
    &
    \lesssim
    \\
    \lesssim
    \|\bra{x}^{1+2\epsilon'} A\|_{\dot H^{\frac12}_{2n}}
    &
    \|\bra{x}^{-\frac12-\epsilon'}u\|_{L^{\frac{2n}{n-1}}}
    +
    \|\bra{x}^{1+2\epsilon'} A\|_{L^{\infty}}
    \||D|^{\frac12}(\bra{x}^{-\frac12-\epsilon'}u)\|_{L^{2}}
  \end{split}
  \end{equation*}
  and by the Sobolev embedding
  $\dot H^{\frac12}\subset L^{\frac{2n}{n-1}}$ we obtain
  \begin{equation*}
    \lesssim
    C_{A}\cdot
    \||D|^{\frac12}\bra{x}^{-\frac12-\epsilon'}u\|_{L^{2}}
  \end{equation*}
  where we used the assumptions on $A$, provided 
  $\epsilon,\epsilon'$ are small enough. Finally, writing
  \begin{equation*}
    |D|^{\frac12}\bra{x}^{-\frac12-\epsilon'}=
    |D|^{\frac12}\bra{x}^{-\frac12-\epsilon'}
    |D|^{-\frac12}\bra{x}^{\frac12+\epsilon''}
    \cdot
    \bra{x}^{-\frac12-\epsilon''}|D|^{\frac12}
  \end{equation*}
  and using again the boundedness of \eqref{eq:L2bdd}
  and the smoothing estimate \eqref{eq:waveH},
  we get
  \begin{equation*}
    \lesssim
    C_{A}\cdot
    \|\bra{x}^{-\frac12-\epsilon''}|D|^{\frac12}u\|_{L^{2}L^{2}}
    \lesssim
    \|H^{\frac14}f\|_{L^{2}}
    \lesssim
    \|f\|_{\dot H^{\frac12}}.
  \end{equation*}

  For the term $II$, proceeding as before we have
  \begin{equation*}
    \||D|^{\frac1q-\frac1p}II\|_{L^{p}L^{q}}
    \lesssim
    \|\bra{x}^{\frac12+\epsilon}|D|^{-\frac12}
      (A \cdot\nabla u)\|_{L^{2}L^{2}}.
  \end{equation*}
  We use the decomposition
  \begin{equation*}
    \bra{x}^{\frac12+\epsilon}|D|^{-\frac12}
          A \cdot\nabla
    =
    T_{1}ST_{2}
    \cdot \bra{x}^{-\frac12-\epsilon''}|D|^{\frac12}
  \end{equation*}
  where, for some $0<\epsilon<\epsilon''<\epsilon'$,
  \begin{equation*}
    T_{1}=
    \bra{x}^{\frac12+\epsilon}|D|^{-\frac12}
    \bra{x}^{-\frac12-\epsilon'}|D|^{\frac12},
  \end{equation*}
  \begin{equation*}
    S=
    |D|^{-\frac12}\bra{x}^{\frac12+\epsilon'}
    A
    \bra{x}^{\frac12+\epsilon'}|D|^{\frac12}
    \equiv
    |D|^{-\frac12}\bra{x}^{1+\epsilon'+\epsilon''}A|D|^{\frac12}
  \end{equation*}
  and
  \begin{equation*}
    T_{2}=
    |D|^{-\frac12}\bra{x}^{-\frac12-\epsilon'}
    \nabla|D|^{-\frac12}\bra{x}^{\frac12+\epsilon''}.
  \end{equation*}
  The operator $T_{1}$ is $L^{2}$ bounded by \eqref{eq:L2bdd};
  this is true also for $T_{2}$ since
  \begin{equation*}
    (T_{2})^{*}=
    \bra{x}^{\frac12+\epsilon''}\nabla|D|^{-\frac12}
    \bra{x}^{-\frac12-\epsilon'}|D|^{-\frac12}
  \end{equation*}
  and we can use the boundedness of the Riesz operator
  $\nabla|D|^{-1}$ in weighted $L^{2}$ and then again 
  \eqref{eq:L2bdd}. Finally, the operator $S$ or rather
  $S^{*}$ is $L^{2}$ bounded again by the Kato-Ponce
  inequality ($\delta=\epsilon'+\epsilon''$)
  \begin{equation*}
    \||D|^{\frac12}\bra{x}^{1+\delta}A|D|^{-\frac12}
      f\|_{L^{2}}
    \lesssim
    \|\bra{x}^{1+\delta}A\|_{\dot H^{\frac12}_{2n}}
    \||D|^{-\frac12}f\|_{L^{\frac{2n}{n-1}}}
    +
    \|\bra{x}^{1+\delta}A\|_{L^{\infty}}
    \|f\|_{L^{2}}
  \end{equation*}
  and the Sobolev embedding
  $\dot H^{\frac12}\subset L^{\frac{2n}{n-1}}$.
  In conclusion we get as above
  \begin{equation*}
    \||D|^{\frac1q-\frac1p}II\|_{L^{p}L^{q}}
    \lesssim
    C_{A}\cdot
    \|\bra{x}^{-\frac12-\epsilon''}|D|^{\frac12}u\|_{L^{2}L^{2}}
    \lesssim
    \|H^{\frac14}f\|_{L^{2}}
    \lesssim
    \|f\|_{\dot H^{\frac12}}.
  \end{equation*}

  For the last term $III$ the computations are similar:
  we have
  \begin{equation*}
    \||D|^{\frac1q-\frac1p}III\|_{L^{p}L^{q}}
    \lesssim
    \|\bra{x}^{\frac12+\epsilon}|D|^{-\frac12}
      (Vu)\|_{L^{2}L^{2}},
  \end{equation*}
  then by the $L^{2}$ boundedness of
  \begin{equation*}
    \bra{x}^{\frac12+\epsilon}|D|^{-\frac12}
    \bra{x}^{-\frac12-\epsilon'}|D|^{\frac12}
  \end{equation*}
  for $\epsilon'>\epsilon$ we arrive at
  \begin{equation*}
    \lesssim
    \||D|^{-\frac12}\bra{x}^{\frac12+\epsilon'}Vu\|_{L^{2}L^{2}}.
  \end{equation*}
  Next we use Sobolev embedding, the assumption 
  $|V|\lesssim \bra{x}^{-1-\epsilon_{0}}$ and
  H\"{o}lder's inequality
  \begin{equation*}
    \lesssim
    \|\bra{x}^{\frac12+\epsilon'}Vu\|_{L^{2}L^{\frac{2n}{n-1}}}
    \lesssim
    \|\bra{x}^{-\frac12-\epsilon''}\|_{L^{2n}}
    \|\bra{x}^{-1-\epsilon'''} u\|_{L^{2}L^{2}}
  \end{equation*}
  with $\epsilon'+\epsilon''+\epsilon'''=\epsilon_{0}$,
  and recalling \eqref{eq:waveH} we conclude the proof
  of the first estimate in \eqref{eq:wavestr}.

  The proof of the second estimate \eqref{eq:wavestr}
  is identical: just notice that the function
  $u=\sin(t \sqrt{H})H^{-\frac12}g$ solves the wave equation
  \begin{equation*}
    u_{tt}+Hu=0,\qquad
    u(0,x)=0,\quad
    u_{t}(0,x)=g
  \end{equation*}
  and proceed as above.
\end{proof}


\begin{thebibliography}{10}

\bibitem{Ben-ArtziKlainerman92-a}
Matania Ben-Artzi and Sergiu Klainerman.
\newblock Decay and regularity for the {S}chr\"{o}dinger equation.
\newblock {\em J. Anal. Math.}, 58:25--37, 1992.
\newblock Festschrift on the occasion of the 70th birthday of Shmuel Agmon.

\bibitem{BurqPlanchonStalker04-a}
Nicolas Burq, Fabrice Planchon, John~G. Stalker, and A.~Shadi Tahvildar-Zadeh.
\newblock Strichartz estimates for the wave and {S}chr\"{o}dinger equations
  with potentials of critical decay.
\newblock {\em Indiana Univ. Math. J.}, 53(6):1665--1680, 2004.

\bibitem{Chihara02-a}
Hiroyuki Chihara.
\newblock Smoothing effects of dispersive pseudodifferential equations.
\newblock {\em Comm. Partial Differential Equations}, 27(9-10):1953--2005,
  2002.

\bibitem{DAnconaFanelli06-a}
Piero D'Ancona and Luca Fanelli.
\newblock {$L^p$}-boundedness of the wave operator for the one dimensional
  {S}chr{\"o}dinger operator.
\newblock {\em Comm. Math. Phys.}, 268(2):415--438, 2006.

\bibitem{DAnconaFanelli08-a}
Piero D'Ancona and Luca Fanelli.
\newblock Strichartz and smoothing estimates of dispersive equations with
  magnetic potentials.
\newblock {\em Comm. Partial Differential Equations}, 33(4-6):1082--1112, 2008.

\bibitem{DAnconaRacke12-a}
Piero {D'Ancona} and Reinhard {Racke}.
\newblock {Evolution equations on non-flat waveguides.}
\newblock {\em {Arch. Ration. Mech. Anal.}}, 206(1):81--110, 2012.

\bibitem{DAnconaSelberg12-a}
Piero D'Ancona and Sigmund Selberg.
\newblock Dispersive estimates for the 1d schr\"{o}dinger equation with a
  steplike potential.
\newblock {\em J. Differential Eq.}, 252:1603--1634, 2012.

\bibitem{ErdoganGoldbergGreen13-a}
Burak Erdogan, Michael Goldberg, and William Green.
\newblock Dispersive estimates for four dimensional {S}chr\"{o}dinger and wave
  equations with obstructions at zero energy.
\newblock arXiv:1310.6302v1 [math.AP], 2013.

\bibitem{ErdoganGoldbergSchlag09-a}
M.~Burak Erdo{\u{g}}an, Michael Goldberg, and Wilhelm Schlag.
\newblock {S}trichartz and smoothing estimates for {S}chr\"{o}dinger operators
  with almost critical magnetic potentials in three and higher dimensions.
\newblock {\em Forum Mathematicum}, 21:687--722, 2009.

\bibitem{FangWang11-a}
Daoyuan Fang and Chengbo Wang.
\newblock Weighted {S}trichartz estimates with angular regularity and their
  applications.
\newblock {\em Forum Math.}, 23(1):181--205, 2011.

\bibitem{Green13-a}
William Green.
\newblock Time decay estimates for the wave equation with potential in
  dimension two.
\newblock arXiv:1307.2219v4 [math.AP], 2013.

\bibitem{HillePhillips74-a}
Einar Hille and Ralph~S. Phillips.
\newblock {\em Functional analysis and semi-groups}.
\newblock American Mathematical Society, Providence, R. I., 1974.
\newblock Third printing of the revised edition of 1957, American Mathematical
  Society Colloquium Publications, Vol. XXXI.

\bibitem{Hoshiro97-a}
Toshihiko Hoshiro.
\newblock On weighted {$L^2$} estimates of solutions to wave equations.
\newblock {\em J. Anal. Math.}, 72:127--140, 1997.

\bibitem{IonescuKenig05-a}
Alexandru Ionescu and Carlos Kenig.
\newblock Well-posedness and local smoothing of solutions of {S}chr\"{o}dinger
  equations.
\newblock {\em Mathematical Research Letters}, 12:193--205, 2005.

\bibitem{JourneSofferSogge91-a}
Jean-Lin Journ{\'{e}}, Avraham Soffer, and Christopher~D. Sogge.
\newblock Decay estimates for {S}chr\"{o}dinger operators.
\newblock {\em Comm. Pure Appl. Math.}, 44(5):573--604, 1991.

\bibitem{Kato65-b}
Tosio Kato.
\newblock Wave operators and similarity for some non-selfadjoint operators.
\newblock {\em Math. Ann.}, 162:258--279, 1965/1966.

\bibitem{KatoYajima89-a}
Tosio Kato and Kenji Yajima.
\newblock Some examples of smooth operators and the associated smoothing
  effect.
\newblock {\em Rev. Math. Phys.}, 1(4):481--496, 1989.

\bibitem{MarzuolaMetcalfeTataru08-a}
Jeremy Marzuola, Jason Metcalfe, and Daniel Tataru.
\newblock Strichartz estimates and local smoothing estimates for asymptotically
  flat {S}chr{\"o}dinger equations.
\newblock {\em J. Funct. Anal.}, 255(6):1497--1553, 2008.

\bibitem{Mochizuki10-a}
Kiyoshi Mochizuki.
\newblock Uniform resolvent estimates for magnetic {S}chr\"{o}dinger operators
  and smoothing effects for related evolution equations.
\newblock {\em Publ. Res. Inst. Math. Sci.}, 46(4):741--754, 2010.

\bibitem{ReedSimon78-a}
Michael Reed and Barry Simon.
\newblock {\em Methods of modern mathematical physics. {IV}. {A}nalysis of
  operators}.
\newblock Academic Press [Harcourt Brace Jovanovich Publishers], New York,
  1978.

\bibitem{RodnianskiSchlag04-a}
Igor Rodnianski and Wilhelm Schlag.
\newblock Time decay for solutions of {S}chr\"{o}dinger equations with rough
  and time-dependent potentials.
\newblock {\em Invent. Math.}, 155(3):451--513, 2004.

\bibitem{RuzhanskySugimoto04-a}
Michael Ruzhansky and Mitsuru Sugimoto.
\newblock A new proof of global smoothing estimates for dispersive equations.
\newblock In {\em Advances in pseudo-differential operators}, volume 155 of
  {\em Oper. Theory Adv. Appl.}, pages 65--75. Birkh\"auser, Basel, 2004.

\bibitem{Walther99-a}
Bj\"{o}rn~G. Walther.
\newblock A sharp weighted {$L\sp 2$}-estimate for the solution to the
  time-dependent {S}chr\"{o}dinger equation.
\newblock {\em Ark. Mat.}, 37(2):381--393, 1999.

\bibitem{Watanabe91-a}
Kazuo Watanabe.
\newblock Smooth perturbations of the selfadjoint operator {$\vert \Delta\vert
  \sp {\alpha/2}$}.
\newblock {\em Tokyo J. Math.}, 14(1):239--250, 1991.

\bibitem{Yajima87-a}
Kenji Yajima.
\newblock Existence of solutions for {S}chr\"{o}dinger evolution equations.
\newblock {\em Comm. Math. Phys.}, 110(3):415--426, 1987.

\end{thebibliography}
\end{document}